\newcommand{\ud}{\, \mathrm{d}}  
\newcommand{\ue}{e}
\newcommand{\GG}{\mathcal{G}}
\newcommand{\RR}{\mathcal{R}}
\newcommand{\m}[1]{\begin{bmatrix} #1 \end{bmatrix}}
\DeclarePairedDelimiter{\abs}{\lvert}{\rvert}
\DeclarePairedDelimiter{\norm}{\lVert}{\rVert}
\newtheorem{defn}{Definition}[section]
\newtheorem{lem}[defn]{Lemma}
\newtheorem{theorem}[defn]{Theorem}
\newtheorem{cor}[defn]{Corollary}
\newtheorem{rem}[defn]{Remark}
\newcommand{\qed}{\hfill $\square$}
\newcommand{\bs}{\boldsymbol}
\newenvironment{proof}{
      \noindent {\bf Proof }}{\qed
      \vspace{0.25\baselineskip}
}
\newcommand{\debproof}{\begin{proof}}
\newcommand{\finproof}{\end{proof}}
\definecolor{darkmagenta}{rgb}{0.5,0,0.5}
\definecolor{hotpink}{rgb}{1,0.2,1}
\definecolor{darkgreen}{rgb}{0,0.5,0}
\definecolor{darkblue}{rgb}{0,0,0.85}
\definecolor{darkred}{rgb}{0.8,0,0}
\definecolor{mellow}{rgb}{.847, 0.72, 0.525}
\newcommand{\bleu}[1]{\textcolor{black}{#1}}
\begin{document}

\title{Doubling Algorithms for Stationary Distributions of Fluid Queues: A Probabilistic Interpretation}
\author{Nigel Bean\thanks{The University of Adelaide, School of Mathematical Sciences, SA 5005, Australia, and ARC Centre of Excellence for Mathematical and Statistical Frontiers \texttt{\{nigel.bean,giang.nguyen\}@adelaide.edu.au}} 
\and
Giang T. Nguyen\footnotemark[\value{footnote}]
\and 
Federico Poloni\footnote{Universit\`a di Pisa, Dipartimento di Informatica, Pisa, Italy, \texttt{federico.poloni@unipi.it}}
}
\date{{\today}}
\maketitle

\begin{abstract}
Fluid queues are mathematical models frequently used in stochastic modelling. Their stationary distributions involve a key matrix recording the conditional probabilities of returning to an initial level from above, often known in the literature as the matrix $\Psi$. Here, we present a probabilistic interpretation of the family of algorithms known as \emph{doubling}, which are currently the most effective algorithms for computing the return probability matrix $\Psi$. 

To this end, we first revisit the links described in \cite{ram99, soares02} between fluid queues and Quasi-Birth-Death processes; in particular, we give new probabilistic interpretations for these connections. We generalize this framework to give a probabilistic meaning for the initial step of doubling algorithms, and include also an interpretation for the iterative step of these algorithms. Our work is the first probabilistic interpretation available for doubling algorithms.    \\

\noindent \underline{Keywords}: doubling algorithms; stochastic fluid flows; quasi-birth-death processes; stationary distribution
\end{abstract}

\section{Introduction}

Stochastic fluid queues are two-dimensional Markov processes frequently used for modeling real-life applications. %
In a fluid queue $\{X_t, \varphi_t\}_{t \geq 0}$, the \emph{phase} $\varphi_t$ is a continuous-time Markov chain on a finite state space $\mathcal{S}$, and the \emph{level} $X_t \in (-\infty,\infty)$ varies linearly at rate $c_{\varphi_t}$. We consider the associated regulated process $\{\overline{X}_t, \varphi_t\}_{t \geq 0}$ that has a boundary at $0$: 
	\begin{align*}
		\overline{X}_t := X_t - \inf_{0 \leq s \leq t} X_s.
	\end{align*} 
	The joint stationary distribution of $\{\overline{X}_t, \varphi_t\}$ has been well-analyzed. Several authors independently derived this distribution using different approaches: time reversal~\cite{asmussen95}, the theory of generators of Markov processes and Wiener-Hopf factorization~\cite{roger94}, and partial differential equations~\cite{kk95}. More recently, Ramaswami~\cite{ram99} and da Silva Soares and Latouche~\cite{soares02} obtained new representations of the stationary distribution using matrix-analytic methods. 

The key component for obtaining the stationary distribution is the probability matrix $\Psi$, of which each entry $\Psi_{ij}$ is the probability of the fluid returning, from above, to the initial level $x$ in phase~$j$, after starting in phase~$i$ and avoiding all levels below $x$. This matrix $\Psi$ is also the minimal nonnegative solution to a \emph{nonsymmetric algebraic Riccati equation} (NARE) of the form 
\begin{align} 
	\label{eqn:nare} 
	B - A X - X D + X C X = 0. 
\end{align} 
 
These probabilistic and algebraic characterizations of $\Psi$ have led to considerable efforts in developing algorithms for computing the matrix efficiently. Asmussen~\cite{asmussen95} presented three iterative schemes, while Guo~\cite{guo01} analyzed fixed-point iterations and Newton's method. Following a different path, Ramaswami~\cite{ram99} and da Silva Soares and Latouche~\cite{soares02} proved that one can approximate fluid processes using Quasi-Birth-Death (QBD) processes, thus allowing quadratically convergent algorithms originally developed for QBDs---such as Logarithmic Reduction~\cite{lr93} and Cyclic Reduction~\cite{bm09}---to be used for solving for $\Psi$. Bean \emph{et al.}~\cite{bean05} proposed First-Exit and Last-Entrance and gave probabilistic interpretations for these two algorithms, as well as for Newton's method, one of Asmussen's iterative schemes, and Logarithmic Reduction applied to the QBD version of fluid processes. 

Probabilistic interpretations are useful as they give an intuitive explanation of how a numerical algorithm works, which in turn allows for shorter and more elegant proofs, as well as for improvements and generalizations of the algorithm. The combination of purely linear-algebraic manipulations and probabilistic understanding has paved ways for many significant theoretical developments, as already seen for QBDs~\cite{lr99, blm05} and fluid queues~\cite{ram99,soares02,bean05}.

In this paper, we focus on a family of algorithms for solving~\eqref{eqn:nare} known as \emph{doubling}, which include the structure-preserving doubling algorithm (SDA) \cite{glx05},  {SDA shrink-and-shift} (SDA-ss)~\cite{bmp}, {alternating-directional doubling algorithm} (ADDA)~\cite{wwl}, and component-wise accurate doubling algorithm~\cite{NguP15}. These algorithms are proven to be more computationally efficient than all the ones presented in~\cite{asmussen95, ram99, guo01, bean05, soares02}. We give these doubling algorithms a probabilistic interpretation, which to the best of our knowledge is the first one available.

In order to arrive at this probabilistic interpretation, we revisit the QBD approximations presented in \cite{ram99} and \cite{soares02} and give them a new, unified interpretation. Putting all three families of algorithms---those in \cite{ram99}, in \cite{soares02}, and doublings---under the same probabilistic light reveals clearly the two reasons why doubling algorithms are more efficient. First of all, they correspond to a QBD with a simpler transition structure, which allows one to reduce the number of matrix blocks that appear in each iteration. Second, they depend on two uniformization parameters instead of one, and these parameters can be chosen suitably to achieve a faster convergence rate.

Thus, the aim of this work is twofold. One is to understand doubling algorithms more thoroughly from a probabilistic point of view, with an eye to possible future generalizations. The other is to make these algorithms, which were developed purely from a linear algebra perspective, more accessible to probabilists and practitioners who work on stochastic fluid flows.

In Section~\ref{sec:sda}, we describe the doubling algorithms. In Section~\ref{sec:psi}, we present three algebraic representations for the return probability matrix $\Psi$ and their corresponding probabilistic interpretations. Based on the latter, we give in Section~\ref{sec:rei} three QBDs whose first downward return matrices $\GG$ contain the matrix $\Psi$ as its sub-block. Two of these QBDs are directly connected to the ones introduced in \cite{ram99, soares02}; the third is connected to the QBD in doubling algorithms. In Section~\ref{sec:int}, we give probabilistic meanings for the initial starting point and iterations of doubling algorithms, and discuss their convergence through the analysis of the matrices $\GG$ and $\RR$ of these QBDs. Some concluding remarks on the efficiency of these algorithms are in Section~\ref{sec:conclusion}. 

\section{Doubling algorithms}
	\label{sec:sda}
	
	Consider a regulated fluid model  $\{\overline{X}_t, \varphi_t\}_{t \geq 0}$, where $\varphi_t$ is a continuous-time Markov chain on a finite state space $\mathcal{S}$, $\overline{X}_t \in [0,\infty)$ and 
		\begin{align*} 
			\frac{\ud}{\ud t} \overline{X}_t = \left\{ \begin{array}{ll} 
				c_{\varphi_t} &  \mbox{ for } \overline{X}_t > 0, \\ 
				\vspace*{-0.3cm} \\
				\max\{c_{\varphi_t}, 0\} & \mbox{ for } \overline{X}_t = 0.
			 \end{array} \right.
		\end{align*} 

	Without loss of generality, we assume all fluid rates \bleu{$c_{\varphi_t}$} to be non-zero, \bleu{because} for any given model with zero rates we can censor out those zero-rate states without affecting the return probability matrix $\Psi$~\cite{soares05}.
		
	We define $C:= \mbox{diag}(c_i)_{i \in \mathcal{S}}$ to be the diagonal rate matrix for the level $\overline{X}_t$, and denote by $T$ the generator of the phase process $\varphi_t$. Let $\mathcal{S}_{+} := \{i \in \mathcal{S}: c_i > 0\}$, $\mathcal{S}_- := \{i \in \mathcal{S}: c_i < 0\}$, $n := |\mathcal{S}|$, $n_{+}: = |\mathcal{S}_{+}|$, and $n_{-}: = |\mathcal{S}_{-}|$.
	We partition the matrices $C$ and $T$ accordingly into sub-blocks
	\begin{align*} 
	C = \left[\begin{array}{cc} 
	C_{+} & \\ & C_{-} 
	\end{array} \right], \quad 
	T = \left[\begin{array}{cc}
		T_{++} & T_{+-} \\ 
		T_{-+} & T_{--} 
		\end{array}\right]. 
	\end{align*} 

It is well-established (see e.g.~\cite[Theorem~2]{roger94}) that the return probability matrix $\Psi$ is the minimal non-negative solution of the nonsymmetric algebraic Riccati equation
\begin{equation}
C_{+}^{-1}T_{+-} + \Psi|C_{-}|^{-1}T_{--} + C_{+}^{-1}T_{++}\Psi + \Psi|C_{-}|^{-1}T_{-+}\Psi = 0. \label{eqn:Psi}	
\end{equation}
In addition to $\Psi$, one can consider the return probability matrix $\widehat{\Psi}$ defined analogously on the process obtained by replacing $C$ with $-C$, i.e., reversing the sign of all rates (\emph{level-reversed} process). Similarly, this matrix $\widehat{\Psi}$ can be characterized as the minimal non-negative solution of the nonsymmetric algebraic Riccati equation
\begin{equation} 
|C_{-}|^{-1}T_{-+} + \widehat{\Psi}C_{+}^{-1}T_{++} + |C_{-}|^{-1}T_{--}\widehat{\Psi} + \widehat{\Psi} C_{+}^{-1}T_{+-} \widehat{\Psi} = 0. \label{eqn:Psihat} 
\end{equation}
We shall see that the doubling algorithms compute $\widehat{\Psi}$ alongside $\Psi$ without additional work.

In the remainder of this section, we describe \emph{operationally} the doubling algorithms, which are numerical methods to compute these two matrices $\Psi$ and $\widehat{\Psi}$.

\noindent\textbf{Initial starting point.} Let
\begin{align} 
\label{eqn:alphaopt}
\alpha_{\mathrm{opt}}:=\min_{i\in\mathcal{S}_-} \abs*{\frac{C_{ii}}{T_{ii}}} \quad \mbox{ and }  \quad \beta_{\mathrm{opt}}:=\min_{i\in\mathcal{S}_+} \abs*{\frac{C_{ii}}{T_{ii}}}.
\end{align}
Next, choose two real constants $\alpha,\beta$ with
\begin{equation} \label{eqn:alphabeta}
0 \leq \alpha \leq \alpha_{\mathrm{opt}}, \quad 0 \leq \beta \leq \beta_{\mathrm{opt}}, \quad \text{ $\alpha$, $\beta$ not both zero},	
\end{equation}
and set
	\begin{align}
Q & := \m{C_+-\alpha T_{++} & -\beta T_{+-}\\ -\alpha T_{-+} & \abs{C_-}-\beta T_{--}}, \\ 
R &:=\m{C_++\beta T_{++} & \alpha T_{+-}\\ \beta T_{-+} & \abs{C_-}+\alpha T_{--}}.
	\end{align}
and compute 
\begin{align}
	 \label{eqn:P0}
\begin{bmatrix}
    E & G\\H & F
\end{bmatrix}:=Q^{-1}R,
\end{align} 
with the same block sizes as above, i.e., $E\in\mathbb{R}^{n_+\times n_+}, F\in\mathbb{R}^{n_-\times n_-}, G\in\mathbb{R}^{n_+\times n_-}, H\in\mathbb{R}^{n_-\times n_+}$.

\noindent\textbf{Iterations.}
Define
\begin{subequations} 
\label{eqn:SDA}
\begin{align}
\widehat{E} & := E(I - G H)^{-1}E, \label{eqn:SDA_Ek} \\
\widehat{F} & := F(I - HG)^{-1}F, \label{eqn:SDA_Fk} \\
\widehat{G} & := G + E(I - GH)^{-1}GF,  \label{eqn:SDA_Gk} \\ 
\widehat{H} & := H + F(I - HG)^{-1}HE.  \label{eqn:SDA_Hk} 
\end{align}
\end{subequations}
The algorithm replaces at each step $(E,F,G,H)$ with the matrices $(\widehat{E},\widehat{F},\widehat{G},\widehat{H})$ computed according to~\eqref{eqn:SDA}, repeating until convergence. More formally, the algorithm can be implemented as follows.

\begin{algorithm}[H]
\KwIn{$T,C$ the transition probability matrix and rate matrix of a fluid queue (with rates $c_i\neq 0$ for all $i$); a threshold $\varepsilon$ (for instance, $\varepsilon=10^{-16}$)}
\KwOut{The return probability matrices $\Psi$ and $\widehat{\Psi}$}
Choose $\alpha,\beta$ according to~\eqref{eqn:alphabeta}\;
Compute initial $E,F,G,H$ according to~\eqref{eqn:P0}\;
\While{$\norm{E}\norm{F} > \varepsilon$}{
	Replace $(E,F,G,H)$ with $(\widehat{E},\widehat{F},\widehat{G},\widehat{H})$ computed according to~\eqref{eqn:SDA}\;
}
$\Psi = G$\;
$\widehat{\Psi} = H$\;
\caption{The pseudocode description of a doubling algorithm} \label{algo:doubling}
\end{algorithm}
We summarize here the convergence properties of this algorithm, which are described in~\cite{bmp,ChiCGMLX09,glx05,wwl}.
\begin{theorem} \label{thm:doublingconvergence}
In Algorithm~\ref{algo:doubling}, denoting by $E_k,F_k,G_k,H_k$ the values of the iterates at step $k$, one has:
\begin{enumerate}
	\item $0 \leq G_0 \leq G_1 \leq G_2 \leq \dots \leq \Psi$, and $\lim_{k\to\infty} G_k = \Psi$. 
	\item $0 \leq H_0 \leq H_1 \leq H_2 \leq \dots \leq \widehat{\Psi}$, and $\lim_{k\to\infty} H_k = \widehat{\Psi}$.
	\item If the fluid queue $\{X_t, \varphi_t\}$ is positive recurrent, then $\lim_{k\to\infty} E_k = 0$. If it is transient, then $\lim_{k\to\infty} F_k = 0$. If it is null recurrent, then both these equalities hold.
	\item The convergence rate for these limits is linear if $\{X_t, \varphi_t\}$ is null recurrent, and quadratic otherwise.
\end{enumerate}
\end{theorem}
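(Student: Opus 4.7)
The plan is to establish the theorem via the structure-preserving pencil analysis underlying all doubling algorithms; the probabilistic content of items~1--3 will be recovered independently in Sections~\ref{sec:rei}--\ref{sec:int}. First, I would associate to each iterate the pencil
\[
\MM_k - \lambda \mathcal{L}_k := \m{E_k & 0 \\ -H_k & I} - \lambda \m{I & -G_k \\ 0 & F_k},
\]
and verify by a block manipulation that the initialization~\eqref{eqn:P0} is precisely the pencil obtained by applying a Cayley-type transform with parameters $(\alpha,\beta)$ to the matrix pencil naturally attached to the NARE~\eqref{eqn:Psi}. The choice~\eqref{eqn:alphabeta} is exactly what makes $Q$ a nonsingular $M$-matrix, so that $Q^{-1}R \geq 0$ componentwise and the initial blocks satisfy $E_0,F_0,G_0,H_0 \geq 0$ together with $\rho(G_0 H_0) < 1$ and $\rho(H_0 G_0) < 1$; in particular both $I-G_0 H_0$ and $I-H_0 G_0$ are nonsingular $M$-matrices.

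The core step is the \emph{doubling identity}: a direct computation shows that the update~\eqref{eqn:SDA} is equivalent to
\[
\m{E_{k+1} & 0 \\ -H_{k+1} & I}\m{I & -G_k \\ 0 & F_k} = \m{E_k & 0 \\ -H_k & I}\m{I & -G_{k+1} \\ 0 & F_{k+1}},
\]
so that, modulo the appropriate invariant subspace, $\MM_k \mathcal{L}_k^{-1}$ is the square of $\MM_{k-1}\mathcal{L}_{k-1}^{-1}$. Expanding $(I-G_k H_k)^{-1}$ as a Neumann series and combining with~\eqref{eqn:Psi}--\eqref{eqn:Psihat} then yields the key residual identities
\[
\Psi - G_k = E_k\,M_k\,F_k, \qquad \widehat{\Psi} - H_k = F_k\,\widehat{M}_k\,E_k,
\]
for bounded nonnegative matrices $M_k,\widehat{M}_k$. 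Items~1 and~2 now follow by induction: nonnegativity of all four iterates is preserved by~\eqref{eqn:SDA} via the Neumann expansion of the inverses, monotonicity $G_k \leq G_{k+1}$ and $H_k \leq H_{k+1}$ is visible directly from~\eqref{eqn:SDA_Gk}--\eqref{eqn:SDA_Hk}, and the upper bounds $G_k \leq \Psi$ and $H_k \leq \widehat{\Psi}$ are immediate from the residual identities.

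For items~3 and~4, the task reduces to controlling $\norm{E_k}$ and $\norm{F_k}$. Because of the doubling relation, both are governed by the eigenvalues of the initial pencil $\MM_0 - \lambda\mathcal{L}_0$ lying inside, respectively outside, the unit disk. The standard spectral characterization for the NARE~\eqref{eqn:Psi} (see~\cite{guo01}) says that $\{X_t,\varphi_t\}$ is positive recurrent iff the innermost eigenvalue on the ``$+$'' side lies strictly inside the unit circle (forcing $E_k\to 0$), transient iff the analogous condition holds on the ``$-$'' side (forcing $F_k\to 0$), and null recurrent iff each of the two families contributes exactly one eigenvalue on the unit circle. Since squaring sends $\lambda_i$ to $\lambda_i^{2^k}$, this gives quadratic convergence in the hyperbolic cases and, after a Jordan-block refinement, linear convergence in the null-recurrent case. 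I expect the main obstacle to be precisely this last identification: relating the recurrence classification of the fluid queue to the spectrum of the \emph{transformed} pencil, which requires tracking how the $(\alpha,\beta)$-Cayley transform maps the spectrum of the original matrix attached to~\eqref{eqn:Psi} onto the unit circle, and then invoking the spectral analyses of~\cite{bmp,wwl}.
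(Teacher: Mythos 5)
Your plan is essentially the standard linear--algebraic proof of this theorem, and it is viable --- but it is not the paper's route, because the paper does not prove Theorem~\ref{thm:doublingconvergence} at all: it explicitly presents the result as a summary of properties ``described in''~\cite{bmp,ChiCGMLX09,glx05,wwl} and defers the formal argument to those references. What you sketch (the pencil $\MM_k-\lambda\mathcal{L}_k$, the Cayley-type transform whose parameters $(\alpha,\beta)$ make $Q$ a nonsingular M-matrix and hence $Q^{-1}R\ge 0$, the squaring of the pencil, residual identities of the form $\Psi-G_k=E_kM_kF_k$, and the unit-circle dichotomy) is precisely the argument of those papers, and the obstacle you flag --- tying the recurrence classification of the fluid queue to the spectrum of the transformed pencil, especially in the critical case where an eigenvalue sits on the unit circle and a Jordan analysis is needed for the linear rate --- is indeed where the real work lies (it is handled in~\cite{ChiCGMLX09}). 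The paper instead develops, in Section~\ref{sec:int}, a \emph{probabilistic} reading: $E_k,F_k,G_k,H_k$ are taboo first-passage probabilities of the mid-level QBD $\{M_{\tau_k},\kappa_{\tau_k}\}$, e.g.\ $G_k$ is the probability of returning to mid-level $y+1/2$ before reaching $y+2^k+1/2$, so that items~1--2 (monotonicity and the bound $G_k\le\Psi$) are immediate because $\Psi-G_k$ is the probability of returning only after visiting $y+2^k+1/2$; items~3--4 are then read off from the spectral radii $\rho(W)$ and $\rho(R_1)$ of the $\GG$- and $\RR$-matrices and their link to recurrence of the fluid queue. The paper is explicit that this is ``a heuristic justification,'' not a formal proof; your approach buys rigor at the price of opacity, the paper's buys intuition (and the monotonicity statements essentially for free).

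Two cautions on your sketch if you intend to carry it out. First, the ``doubling identity'' as written is not the standard one; the usual formulation constructs $\widetilde{\MM},\widetilde{\mathcal{L}}$ with $\widetilde{\MM}\mathcal{L}_k=\widetilde{\mathcal{L}}\MM_k$ and sets $\MM_{k+1}=\widetilde{\MM}\MM_k$, $\mathcal{L}_{k+1}=\widetilde{\mathcal{L}}\mathcal{L}_k$, from which the squaring of the eigenvalues on the deflating subspaces follows. Second, in the null-recurrent case $\rho(\Psi\widehat{\Psi})=1$, so $\rho(G_kH_k)<1$ does not follow merely from $G_k\le\Psi$ and $H_k\le\widehat{\Psi}$; a strictness or irreducibility argument is needed to keep $I-G_kH_k$ a nonsingular M-matrix and the iteration well defined.
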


We speak of \emph{doubling algorithms} in the plural because several variants have appeared in literature, differing only in the choice of the parameters $\alpha$ and $\beta$. The first one to appear, called SDA~\cite{glx05}, uses $\alpha=\beta=\min(\alpha_{\mathrm{opt}},\beta_{\mathrm{opt}})$. The variant called SDA-ss~\cite{bmp} uses $\alpha=0$, $\beta=\beta_{\mathrm{opt}}$. The variant called ADDA~\cite{wwl} uses $\alpha=\alpha_{\mathrm{opt}}$, $\beta=\beta_{\mathrm{opt}}$, and is the most computationally efficient of the three.

Our goal in the next sections is to give a probabilistic interpretation of this procedure.

\section{The Return Probability Matrix $\Psi$}
	\label{sec:psi} 
	We begin by generalizing the algebraic representation and probabilistic interpretation of the return probability matrix $\Psi$ presented in da Silva Soares and Latouche \cite{soares02}, in Section~\ref{sec:known}. New representations of $\Psi$ are presented in Section~\ref{sec:new}.
	
	\subsection{A known representation} 
		\label{sec:known}

	For simplicity, we assume that the level process $\{X_t\}$ has unit fluid rates only, that is, $c_i = \pm 1$ for all $i \in \mathcal{S}$. This assumption is not restrictive, since a fluid model with rates $\pm 1$ can always be obtained by a suitable time rescaling, and has the same return probability matrix $\Psi$. 

	Moreover, we assume that $X_0 = 0$ and $\varphi_0 \in \mathcal{S}_+$. Let $y$ be the time of the first transition into a phase in $\mathcal{S}_-$, which is also the level where the fluid process for the first time stops increasing and starts decreasing, by the assumption of unit rates. Hence $X_y = y$, $\varphi_y \in \mathcal{S}_-$.

	 Let $\tau(-x) := \inf \{t \geq y: X_t = y-x\}$ be the first time the process $\{X_t\}$ returns to level~$y-x$. 
	 Then, the time-changed phase process $\{\varphi_{\tau(-x)}\}_{x \geq 0}$ is also a Markov chain. Let $U$ be the $|\mathcal{S}_{-}| \times |\mathcal{S}_{-}|$ generator of the phase $\{\varphi_{\tau(-x)}\}$ of the \emph{downward record} process $\{X_{\tau(-x)}, \varphi_{\tau(-x)}\}_{x \geq 0}$. It is known (see e.g.~\cite[Equation~(13)]{soares02}) that 
		\begin{align}
			\label{eqn:U} 
			U = T_{--} +T_{-+} \Psi.  
		\end{align}  
		
 If we condition on $y$, a standard argument gives an expression for the return probability matrix $\Psi$:
\begin{align}
	\Psi = \int_0^\infty \exp(T_{++}y)T_{+-}\exp(Uy) \ud y. 
\end{align}

In~\cite{soares02}, the authors uniformized the first upward part of the path, from level~$0$ to level~$y$, and then the remaining part, from level~$y$ returning to level~$0$, both with the same rate. Here, we reproduce their approach in a more general fashion, in which there are two different uniformization rates, $\lambda$ and~$\mu$. 
	
Assume that $\varphi_0 \in \mathcal{S}_{+}$. Let $\mathcal{U}_{\lambda} := \{\mathcal{U}_{\lambda}(t)\}_{t \geq 0}$ (for `upwards') and $\mathcal{D}_{\mu} := \{\mathcal{D}_{\mu}(t)\}_{t \geq 0}$ (for `downwards') denote two Poisson processes with rates $\lambda$ and $\mu$, respectively. Consider a uniformization of the phase process $\{\varphi_t\}$ using $\mathcal{U}_{\lambda}$. Let $k +1$, $k \geq 0$, be the number of steps it takes until the uniformized process first switches from $\mathcal{S}_{+}$ to $\mathcal{S}_{-}$, at time and level $y$. 
Thus, there are $k$ events in the time interval $[0,y)$ and an event at time $y$. We use the process $\mathcal{D}_{\mu}$ for uniformizing with rate~$\mu$ the downward record process $\{\varphi_{\tau(-x)}\}$. Let $n \geq 0$ be the number of events of $\mathcal{D}_{\mu}$ in $[0,y]$, i.e., until the downward record process reaches level~$0$. 

Summing on $k$ and $n$, we obtain
\begin{align}
	\label{eqn:Psids}
	\Psi = \int_0^\infty \left[\sum_{k=0}^\infty \ue^{-\lambda y}\frac{(\lambda y)^k}{k!} P_{\lambda++}^k\right] \lambda P_{\lambda+-}\left[\sum_{n=0}^\infty \ue^{-\mu y}\frac{(\mu y)^n}{n!} V^n_{\mu}\right] \ud y,
\end{align}
with 
	%
	\begin{align} 
			\label{eqn:Plambda}
		P_\lambda & := I + \lambda^{-1}T, \\
			\label{eqn:Vmu} 
		V_{\mu} & := I + \mu^{-1}U.
	\end{align} 
Swapping the order of summation and integration gives
\begin{align} 
	\label{latouchesum}
	\Psi & = \sum_{k,n=0}^{\infty} \gamma_{k,n} P_{\lambda++}^kP_{\lambda+-}V_\mu^n, \\ 	
	\intertext{where} 
	\label{eqn:gammakn} 
\gamma_{k,n} 
& := \frac{(k+n)!}{k!n!}\frac{\lambda^{k+1}\mu^n}{(\lambda+\mu)^{k+n+1}}.
	\end{align} 
Figure~\ref{f:times} gives a sample path of $\{\overline{X}_t\}$, with $k = 3$ and $n = 4$.

\begin{figure}[h!]
    \centering
  		 \includegraphics[scale=0.7]{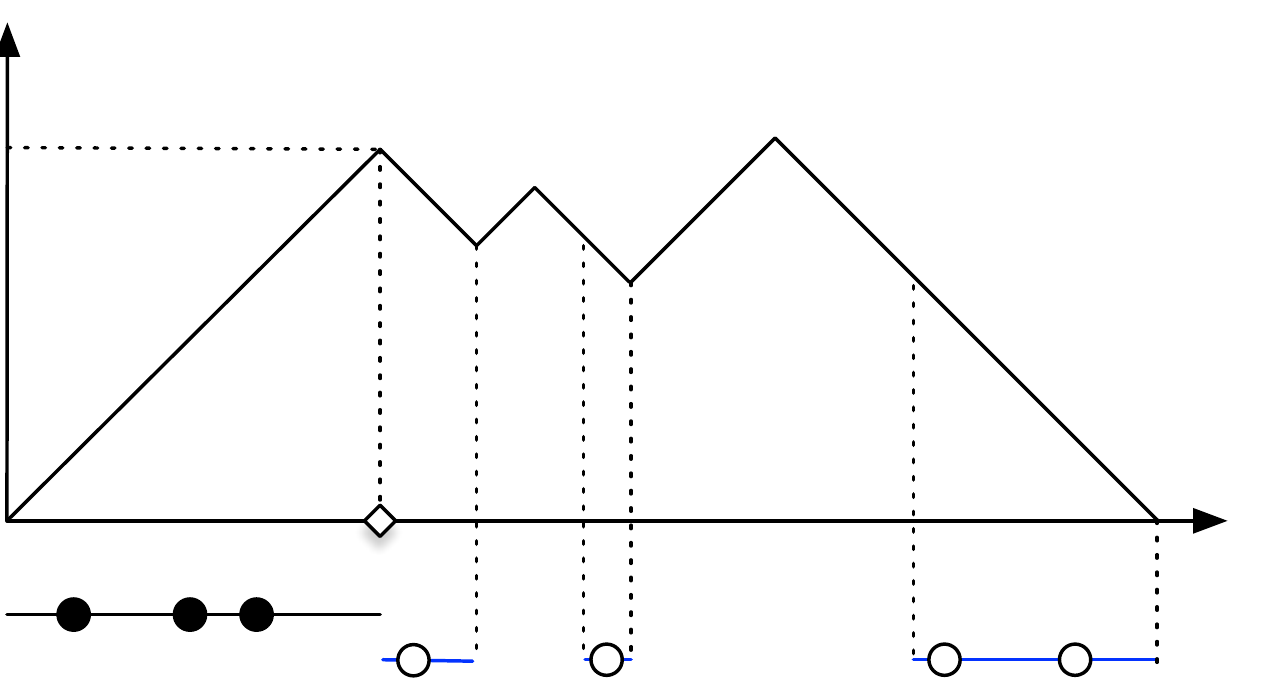}
		 \put(-265,115){\makebox(0,0)[l]{$y$}}
		 \put(-265,40){\makebox(0,0)[l]{$0$}} 
		  \put(-3,40){\makebox(0,0)[l]{$t$}} 
		  \put(-259,152){\makebox(0,0)[l]{$\overline{X}_t$}} 
     \caption{A sample path of $\{\overline{X}_t\}$, with $k = 3$ events $\bullet$ in the time interval $[0,y]$, an event $\diamondsuit$ at time $y$, and $n = 4$ events $\circ$ in the censored time it takes for the downward record process to decrease from $y$ to~$0$.}
    \label{f:times}
\end{figure}

Da Silva Soares and Latouche~\cite{soares02} gave a probabilistic interpretation for the sum in~\eqref{latouchesum}. sample paths of the uniformized processes are decomposed into disjoint sets $\mathcal{A}_{k,n}$; the terms $P_{\lambda++}^kP_{\lambda+-}$ and $V_{\mu}^n$ account for the transitions in the two uniformized processes, while the coefficient $\gamma_{k,n}$ gives the probability to observe $k$ and $n$ events, respectively. To explain the formula~\eqref{eqn:gammakn}, consider the two Poisson process $\mathcal{U}_\lambda$ and $\mathcal{D}_\mu$: they are two independent Poisson processes which take place on two different intervals of time of length $y$: the first is the real time for the fluid process, on its way up to reaching the level $y$ for the first time; the second is the censored time of the downward record process, returning to level $0$ after starting at level $y$. Suppose instead that these two processes are superimposed onto the same time interval $[0,y]$, as in Figure~\ref{f:sum0-dots}.

\begin{figure}[ht]
    \centering
  		 \includegraphics[scale=0.7]{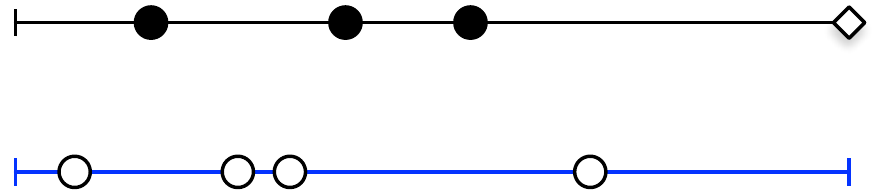}
		 \put(-180,48){\makebox(0,0)[l]{$u_0$}}	
		 \put(-149,48){\makebox(0,0)[l]{$u_1$}}		 
		 \put(-114,48){\makebox(0,0)[l]{$u_2$}} 
		 \put(-88,48){\makebox(0,0)[l]{$u_3$}}		
		 \put(-9,48){\makebox(0,0)[l]{$u_4$}}	 
		 \put(-188,36){\makebox(0,0)[l]{$0$}}		 
		 \put(2,36){\makebox(0,0)[l]{$y$}} 
		 \put(-188,4){\makebox(0,0)[l]{$0$}}
		 \put(2,4){\makebox(0,0)[l]{$y$}}	
		 %

    \caption{The processes $\mathcal{U}_\lambda$ and $\mathcal{D}_\mu$, superimposed on the same interval $[0,y]$.}
    \label{f:sum0-dots}
\end{figure}
Then, $\gamma_{k,n}$ is the probability of observing $n$ events of the second process before the $k+1$st event of the first one. Since the probability of observing an event of the first process before one of the second is $\frac{\lambda}{\lambda+\mu}$, then $\gamma_{k,n}$ is the probability mass function of a negative binomial distribution with $k+1$ successes and probability $\frac{\lambda}{\lambda+\mu}$, which is exactly~\eqref{eqn:gammakn}.

\subsection{New representations} 
	\label{sec:new} 
	
Using probabilistic arguments, we consider three different representations for the matrix $\Psi$, each involving only one summation variable. Intuitively, Theorem~\ref{theo:sums} presents new ways of categorizing returning-to-zero sample paths of a fluid process, relying on two Poisson processes with rates $\lambda$ and $\mu$, respectively. They are the crucial foundation of our new interpretations for the links between QBDs and fluid queues, described in Section~\ref{sec:rei}. 

\begin{theorem}
	\label{theo:sums}
\begin{align} 
	\label{sum1}
\Psi & = \sum_{k=0}^{\infty} P_{\lambda++}^k P_{\lambda+-} \left(I- \lambda^{-1} U\right)^{-k-1} \\ 
	\label{sum2}
	& = \sum_{n=0}^{\infty} \left(I- \mu^{-1}T_{++}\right)^{-n-1}P_{\mu+-}V_\mu^n, \\
	& = \sum_{m=0}^{\infty} Q^m\left(I-\mu^{-1}T_{++}\right)^{-1} \left(P_{\lambda+-}W + P_{\mu+-}\right)W^m,  	\label{sum3}
	\end{align}
	where 
		\begin{align} 
			\label{eqn:W} 
			Q:= \left(I-\mu^{-1}T_{++}\right)^{-1}\left(I+\lambda^{-1}T_{++}\right), \quad
			W :=  \left(I + \mu^{-1}U\right)\left(I- \lambda^{-1}U\right)^{-1},
		\end{align} 
\end{theorem}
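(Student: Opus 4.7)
My plan is to derive all three identities from the integral representation
\[
\Psi = \int_0^\infty \exp(T_{++}y)\,T_{+-}\,\exp(Uy)\,\ud y
\]
given earlier in the paper. For \eqref{sum1}, the idea is to uniformize only the upward exponential: expanding $\exp(T_{++}y) = \sum_{k\geq 0} \ue^{-\lambda y}(\lambda y)^k/k!\,P_{\lambda++}^k$, swapping sum and integral, and applying the elementary matrix Laplace transform
\[
\int_0^\infty y^k \exp((U - \lambda I)y)\,\ud y = k!\,(\lambda I - U)^{-(k+1)},
\]
I obtain, after combining the factors $\lambda^{k+1}$, $k!$, and the $\lambda^{-1}$ coming from $P_{\lambda+-} = \lambda^{-1}T_{+-}$, the formula \eqref{sum1}. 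The probabilistic meaning is that the upward phase dynamics alone are uniformized: conditional on $k$ Poisson events, the first-switch time $y$ is Erlang$(k+1,\lambda)$, and $(I-\lambda^{-1}U)^{-(k+1)}$ is exactly $\E[\exp(Uy)]$ against this distribution. Identity \eqref{sum2} then follows by the mirror argument, uniformizing only the downward factor $\exp(Uy)$ at rate $\mu$.

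For \eqref{sum3}, the key step is to verify that $\Psi$ satisfies the Sylvester-type equation
\[
\Psi - Q\Psi W \;=\; A\bigl(P_{\lambda+-}W + P_{\mu+-}\bigr), \qquad A := (I - \mu^{-1}T_{++})^{-1},
\]
and then invoke the Neumann expansion $\Psi = \sum_{m\geq 0} Q^m A(P_{\lambda+-}W + P_{\mu+-})W^m$, which is precisely \eqref{sum3}. To check the Sylvester equation, I would left-multiply by $A^{-1} = I - \mu^{-1}T_{++}$ and right-multiply by $B^{-1} := I - \lambda^{-1}U$ (so that $W = V_\mu B$), use $A^{-1}Q = P_{\lambda++}$ and $WB^{-1} = V_\mu$, and expand. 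The cross-terms $(\lambda\mu)^{-1}T_{++}\Psi U$ cancel, and after collecting the common $(\lambda^{-1}+\mu^{-1})$ factors the identity reduces to $T_{++}\Psi + \Psi U + T_{+-} = 0$; substituting $U = T_{--} + T_{-+}\Psi$ from \eqref{eqn:U} then recovers exactly the Riccati equation \eqref{eqn:Psi} under unit rates, which holds by hypothesis. As a consistency check, letting $\mu\to\infty$ collapses \eqref{sum3} into \eqref{sum1}, and $\lambda\to\infty$ collapses it into \eqref{sum2}.

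The main technical obstacle is justifying the Neumann expansion, which requires the spectral bound $\rho(Q)\rho(W) < 1$. The eigenvalues of $Q$ and $W$ are images of the spectra of $T_{++}$ and $U$ under the Cayley-type M\"obius maps $t\mapsto \mu(\lambda+t)/(\lambda(\mu-t))$ and $u\mapsto \lambda(\mu+u)/(\mu(\lambda-u))$; since $T_{++}$ and $U$ have spectra in the closed left half plane, an elementary analysis of these maps, combined with the constraints on $\lambda,\mu$ tied to the bounds $\alpha\leq\alpha_{\mathrm{opt}}$ and $\beta\leq\beta_{\mathrm{opt}}$ of Section~\ref{sec:sda}, pulls every eigenvalue into the open unit disk and yields the desired bound. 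An alternative route to \eqref{sum3}, avoiding explicit spectral analysis, is to start from the double sum \eqref{latouchesum} and apply the Vandermonde identity $\binom{k+n}{k} = \sum_{m=0}^{\min(k,n)}\binom{k}{m}\binom{n}{m}$ to regroup by the diagonal index $m$; after simplification using $P_{\mu+-}=(\lambda/\mu)P_{\lambda+-}$ and $(\lambda I - U)B = \lambda I$ (where $B = (I - \lambda^{-1}U)^{-1}$), this produces \eqref{sum3} directly and makes the convergence of the single sum inherit from that of the absolutely convergent double sum.
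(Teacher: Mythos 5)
Your argument is correct in substance, but it is a genuinely different proof from the paper's. The paper's proof is purely probabilistic: the two Poisson processes $\mathcal{U}_\lambda$ and $\mathcal{D}_\mu$ are superimposed, and each of \eqref{sum1}--\eqref{sum3} arises from partitioning the same set of returning sample paths by a different counting variable (number of $\mathcal{U}_\lambda$ events, number of $\mathcal{D}_\mu$ events, or the number of alternation epochs $c_j$), so each term is the probability of a disjoint event and convergence to $\Psi$ is automatic. Your treatment of \eqref{sum1}--\eqref{sum2} — uniformize one exponential factor of $\int_0^\infty \exp(T_{++}y)T_{+-}\exp(Uy)\ud y$ and evaluate $\int_0^\infty y^k\exp((U-\lambda I)y)\ud y=k!\,(\lambda I-U)^{-(k+1)}$ — is the analytic shadow of exactly that argument and is fine. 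For \eqref{sum3}, your Stein/Sylvester computation is also correct: clearing $A^{-1}$ on the left and $B^{-1}$ on the right does reduce the equation to $T_{+-}+T_{++}\Psi+\Psi U=0$, i.e.\ to \eqref{eqn:Psi} via $U=T_{--}+T_{-+}\Psi$. What you gain is a short algebraic verification; what you lose is the sample-path reading of \eqref{sum3} (alternating $\mu$- and $\lambda$-intervals), which is precisely what the paper needs later to build the QBD of Lemma~\ref{lem:thirdrep} and interpret the doubling iteration.

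One step of your first route would not survive scrutiny as written: the claim that, because $T_{++}$ and $U$ have spectra in the closed left half-plane, the M\"obius maps send every eigenvalue of $Q$ and $W$ into the \emph{open} unit disk. First, the half-plane alone does not give $\abs{1+\lambda^{-1}t}\le\abs{1-\mu^{-1}t}$ (take $t$ purely imaginary with large modulus and $\lambda<\mu$); you need the Gershgorin localization of $\sigma(T_{++})$ in the disk $\abs{t+c}\le c$, $c=\max_i\abs{T_{ii}}\le\lambda$, which is exactly what the uniformization bounds supply. Second, the claim is simply false for $W$ in the recurrent case: there $U$ is a generator, $0\in\sigma(U)$, and $\rho(W)=1$ exactly (the paper's own equation \eqref{perronG}). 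What you actually need is $\rho(Q)\rho(W)<1$, which holds because $\rho(Q)<1$ whenever $T_{++}$ is a nonsingular subgenerator; that, plus uniqueness of the solution of the Stein equation, identifies the Neumann series with $\Psi$. Your alternative route — Vandermonde regrouping of the nonnegative double series \eqref{latouchesum} — sidesteps all of this by Tonelli, and I have checked that it does reproduce \eqref{sum3} after using $P_{\lambda+-}W+P_{\mu+-}=(\lambda^{-1}+\mu^{-1})T_{+-}(I-\lambda^{-1}U)^{-1}$ and the commutativity of the rational functions of $T_{++}$ (resp.\ of $U$) involved; I would make that the primary argument and demote the Stein equation to a consistency check.
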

and $P_{\mu} := I + \mu^{-1}T$, analogously with $P_\lambda$.
\begin{rem} 
The spirit of the proof is as follows. We show that 
\begin{itemize} 
	\item[(i)] The first representation~\eqref{sum1} arises from uniformizing the fluid process on the way up to $y$ with rate $\lambda$. The Poisson process in the uniformization marks a sequence of exponential intervals of rate $\lambda$. We then observe the downward record process on the way down at these exponential epochs. 
	\item[(ii)] The second representation~\eqref{sum2} comes from the opposite: uniformizing the downward record process with rate $\mu$, and observing the initial upward path at these exponential intervals. 
	\item[(iii)] The third one~\eqref{sum3}, which leads to the QBD underlying the doubling algorithms, is a mixture of the previous two: alternating between a uniformization step and an observation, all the way up for the level process and then all the way down for the downward record process. The intervals at which we take actions---observing or uniformizing---are created by carefully superimposing the two Poisson processes. 
\end{itemize} 
\end{rem}

\begin{proof}

\textbf{First representation.} To show Equality~\eqref{sum1}, we denote by $\{u_i\}_{i = 1, \ldots, k} $ the sequence of time epochs associated with the $k$ Poisson events of $\mathcal{U}_{\lambda}$ in $(0,y)$, where $0 < u_1 < \cdots < u_k < y$, and define $u_0 := 0$ and $u_{k + 1} := y$. Next, let $m_i$, $i = 1, \ldots, k+1$, be the number of events of $\mathcal{D}_{\mu}$ during $[u_{i - 1}, u_i)$. Figure~\ref{f:sum1-dots} gives a sample path with $k = 3$.

\begin{figure}[ht]
    \centering
  		 \includegraphics[scale=0.7]{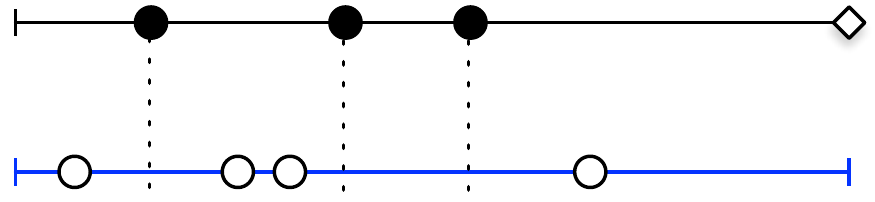}
		 \put(-180,48){\makebox(0,0)[l]{$u_0$}}	
		 \put(-149,48){\makebox(0,0)[l]{$u_1$}}		 
		 \put(-114,48){\makebox(0,0)[l]{$u_2$}} 
		 \put(-88,48){\makebox(0,0)[l]{$u_3$}}		
		 \put(-9,48){\makebox(0,0)[l]{$u_4$}}	 
		 \put(-188,36){\makebox(0,0)[l]{$0$}}		 
		 \put(2,36){\makebox(0,0)[l]{$y$}} 
		 \put(-188,4){\makebox(0,0)[l]{$0$}}
		 \put(2,4){\makebox(0,0)[l]{$y$}}		
    \caption{A sample path with $k = 3$ uniformization events $\bullet$ of $\mathcal{U}_{\lambda}$ in the time interval $[0,y)$, $m_1 = 1$, $m_2 = 2,$ $m_3 = 0$, and $m_4 = 1$.}
    \label{f:sum1-dots}
\end{figure}

During the first ascent, at each event $u_i$, $i = 1, \ldots, k$, there is a uniformization step $P_{\lambda++}$, which leads to the term $P_{\lambda++}^{k}$ in the sum~\eqref{sum1}. At time $y$, there is a uniformization step associated with a transition from $\mathcal{S}_+$ to $\mathcal{S}_-$, which gives the term $P_{\lambda+-}$.

Recall that $V_\mu := 1 + \mu^{-1} U$ is the probability transition matrix of the uniformized downward record phase process $\left\{\varphi_{\tau(-x)}\right\}$, and transitions according to $V_{\mu}$ happen at the events of $\mathcal{D}_\mu$. By the same argument used to justify~\eqref{eqn:gammakn}, the probability that there are $m_i$ events of $\mathcal{D}_\mu$ in $[u_{i-1},u_i)$ is $\mu^{m_i}\lambda / (\lambda+\mu)^{m_i+1}$.

Let $P_{i}$ be the matrix with elements $
		[P_i]_{jk} := \mathds{P}[\bleu{\varphi_{\tau( - u_{i})}} = k | \bleu{\varphi_{\tau( - u_{i - 1})}} = j]$ for $j, k \in \mathcal{S}_{-}.$ Then, 
	\begin{align}
		\label{eqn:Pd}
	      P_i & = \sum_{m_i = 0}^{\infty} \frac{\mu^{m_i} \lambda }{(\lambda + \mu)^{m_i + 1}}V_{\mu}^{m_i} 
	       = \frac{\lambda}{\lambda + \mu} \left(I - \frac{\mu}{\lambda + \mu} V_{\mu} \right)^{-1}  
	       = \left(I - \frac{1}{\lambda}U\right)^{-1}.
	\end{align} 
	
	Since there are \bleu{$k + 1$} intervals $[u_{i - 1}, u_{i})$, $i = 1, \ldots, k + 1$, this explains the term $(I - \lambda^{-1}U)^{\bleu{-k - 1}}$ in, and completes the proof of, \eqref{sum1}.
	
	Note that as $\left(I - {\lambda}^{-1}U\right)^{-1} = \int_0^{\infty} \lambda \ue^{-\lambda x} \ue^{Ux} \ud x$, the matrix $P_i$ contains the conditional probabilities of the downward record process observed at exponential intervals with rate~$\lambda$. Thus, the term $(I - \lambda^{-1}U)^{-k - 1}$ can also be seen directly as arising from observing the downward record process at the $(k + 1)$ exponentially distributed intervals marked by the epochs $u_i$, and we could have skipped the construction of $\mathcal{D}_{\mu}$ altogether. However, we need this process to explain the following representations, and both $\mathcal{U}_{\lambda}$ and $\mathcal{D}_{\mu}$ to explain the third, so we maintain both processes throughout the proof.
	
	\textbf{Second Representation.}  Equality~\eqref{sum2} comes from grouping the sample paths in a different manner to that in the first representation: according to the number of events, $n$, of the process $\mathcal{D}_{\mu}$ in $[0,y]$. This provides the $n$ uniformizing steps of the downward record process, represented by the term $V_{\mu}^n$ in the sum \bleu{in}~\eqref{sum2}. 
	
	Let $\{d_i\}_{i = 1, \ldots, n}$ be the associated sequence of time epochs, and define $d_0 := 0$. \bleu{Figure~\ref{f:sum2-dots} represents a sample path with $n = 4$.}
	\begin{figure}[h!]
    \centering
  		 \includegraphics[scale=0.7]{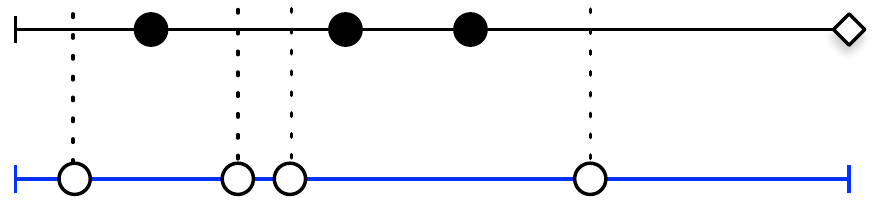}
		 \put(-166,-7){\makebox(0,0)[l]{$d_1$}}	
		 \put(-135,-7){\makebox(0,0)[l]{$d_2$}}			 
		 \put(-122,-7){\makebox(0,0)[l]{$d_3$}}
            	 \put(-62,-7){\makebox(0,0)[l]{$d_4$}}	
		 \put(-188,36){\makebox(0,0)[l]{$0$}}		 
		 \put(2,36){\makebox(0,0)[l]{$y$}} 
		 \put(-188,4){\makebox(0,0)[l]{$0$}}
		 \put(2,4){\makebox(0,0)[l]{$y$}}		 
    \caption{A sample path with $n = 4$ uniformization events $\circ$ of $\mathcal{D}_{\mu}$.}
    \label{f:sum2-dots}
\end{figure}

By arguments analogous to the ones used for the first representation, \bleu{during} the initial upward journey from level~$0$ to level~$y$ we observe the phase process $\{\varphi_t\}$ at the end of each exponential interval $[d_{i}, d_{i + 1})$, for $i = 0, \ldots, \bleu{n - 1}$. \bleu{This explains  the term $(1 - \mu^{-1}T_{++})^{-n}$ in \eqref{sum2}.} 

The last interval, $[d_k, y]$, is not an exponential interval with parameter~$\mu$,  \bleu{and we proceed through a different argument for the remaining unexplained term $\left(1 - \mu^{-1}T_{++}\right)^{-1}\mu^{-1}T_{+-}$. Note that in this interval there can be any number of events of type $\mathcal{U}_{\lambda}$, none of type $\mathcal{D}_{\mu}$, and precisely one event at time $y$ to transition from $\mathcal{S}_{+}$ to $\mathcal{S}_{-}$. Thus, the probability transition matrix for the state at time $y$, given the state at time $d_k$, is given by} 
	\begin{align} 
		\sum_{s = 0}^{\infty} \left(\frac{\lambda}{\lambda + \mu}\right)^{s + 1}P_{\lambda ++}^{s} P_{\lambda +-} &= \frac{\lambda}{\lambda+\mu}\left(I- \frac{\lambda}{\lambda+\mu} P_{\lambda++}\right)^{-1} \lambda^{-1} T_{+-} \nonumber\\
		&= \left((\lambda+\mu)I - \lambda P_{\lambda++}\right)^{-1} T_{+-} \nonumber\\
		&= \left(I - \mu^{-1}T_{++}\right)^{-1}\mu^{-1}T_{+-} \label{endcount}
	\end{align}
	
\textbf{Third Representation.} To arrive at~\eqref{sum3}, we superimpose $\mathcal{U}_{\lambda}$ and $\mathcal{D}_{\mu}$ and define a new sequence \bleu{on $[0,y)$} based on \bleu{the previously defined sequences} $\{u_i\}_{\bleu{i \geq 0}}$ and $\{d_i\}_{\bleu{i \geq 0}}$ as follows 
	\begin{align} 
		\label{eqn:c0} 
		c_0 & = 0, \\ 
		\label{eqn:codd} 
		c_{2i + 1} & = \min\{d_i: d_i > c_{2i}\}, \\
		\label{eqn:ceven} 		
		c_{2i + 2} & = \min\{u_i: u_i > c_{2i+1}\} \quad \mbox{for } i \geq 0.
	\end{align} 

	 For $c_{2i + 1} < y$, each interval $[c_{2i}, c_{2i + 1}]$ contains an arbitrary number of events of $\mathcal{U}_\lambda$ and exactly one event of $\mathcal{D}_{\mu}$, which lies at its right endpoint, and thus the interval has an exponential length with parameter $\mu$. On the other hand, for $c_{2i + 2} < y$, each interval $[c_{2i +1}, c_{2i + 2}]$ contains an arbitrary number of events of $\mathcal{D}_\mu$ and exactly one event of $\mathcal{U}_{\lambda}$, which lies at its right endpoint, and thus its length is exponentially distributed with parameter $\lambda$. 
	
	Let $N$ be the number of events $c_j$ in $[0,y)$. There are two possibilities: $N$ is even \bleu{(we include here the case $N = 0$)}, and $N$ is odd. We separate the summation in~\eqref{sum3} into two sums:
	\begin{align} 
		\label{eqn:sum3sep}
		\Psi & = \sum_{m=0}^{\infty} Q^m\left(I-\mu^{-1}T_{++}\right)^{-1} P_{\lambda+-}W^{m + 1} \nonumber \\
		& \;\; + 
		\sum_{m=0}^{\infty} Q^m\left(I-\mu^{-1}T_{++}\right)^{-1} P_{\mu+-}W^m I, 
	\end{align} 
	and show that the first sum comes from odd values of $N$, and the second from even ones. We have added an identity matrix as the final factor of the second sum; the reason will be explained later. 
	
	\bleu{First, consider the case when $N$ is odd.} Suppose that $N = 2m + 1$ for some integer $m \geq 0$. Figure~\ref{f:dots-odd} gives a sample path for $N = 5$. 

	\begin{figure}[h!]
    		\centering
  		 \includegraphics[scale=0.7]{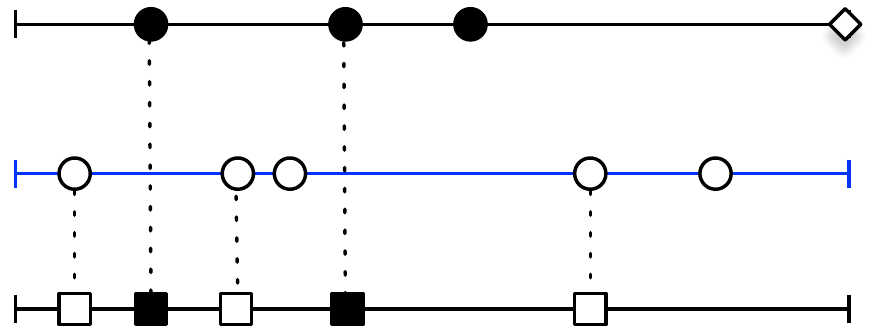}
		 \put(1,4){\makebox(0,0)[l]{$y$}}
		 \put(-186,4){\makebox(0,0)[l]{$0$}}
		 \put(1,34){\makebox(0,0)[l]{$y$}}
		 \put(-186,34){\makebox(0,0)[l]{$0$}}
		 \put(1,64){\makebox(0,0)[l]{$y$}}
		 \put(-186,64){\makebox(0,0)[l]{$0$}}
		 \put(-165,45){\makebox(0,0)[l]{$d_1$}}
		 \put(-165,-5){\makebox(0,0)[l]{$c_1$}}
		 \put(-151,74){\makebox(0,0)[l]{$u_1$}}		 		
		 \put(-151,-5){\makebox(0,0)[l]{$c_2$}}	
		 \put(-133,45){\makebox(0,0)[l]{$d_2$}}		   
		 \put(-133,-5){\makebox(0,0)[l]{$c_3$}}
		 \put(-112,74){\makebox(0,0)[l]{$u_2$}}		 
		 \put(-86,74){\makebox(0,0)[l]{$u_3$}}		 	
		 \put(-111,-5){\makebox(0,0)[l]{$c_4$}}	
		 \put(-62,45){\makebox(0,0)[l]{$d_4$}}			 
		 \put(-62,-5){\makebox(0,0)[l]{$c_5$}}	 
		 \put(-172,12){\makebox(0,0)[l]{$\mu$}}	
		 \put(-158,14){\makebox(0,0)[l]{$\lambda$}}	
		 \put(-140,12){\makebox(0,0)[l]{$\mu$}}			 		\put(-120,14){\makebox(0,0)[l]{$\lambda$}}	
		 \put(-84,12){\makebox(0,0)[l]{$\mu$}}					\put(-34,14){\makebox(0,0)[l]{$\lambda$}}			 
    \caption{A sample path of $\mathcal{U}_{\lambda}$ and $\mathcal{D}_{\mu}$, where $N = 5$.}
    \label{f:dots-odd}
\end{figure}	 
	
	Let $c_{2m + 2} := y$, then the sequence $\{c_i\}_{i = 0, \ldots, 2m + 2}$ mark $(2m + 2)$ exponentially distributed intervals, with rates alternating between $\mu$ and $\lambda$, starting with~$\mu$ and ending with $\lambda$. The last interval $[c_{N}, y]$ is exponential with rate $\lambda$ because it contains exactly one event of $\mathcal{U}_{\lambda}$, at time~$y$.  
During the upward path to $y$, for the first $(2m + 1)$ intervals, we alternate between
	\begin{itemize} 
	\item observing the fluid process at the end of an interval exponentially distributed with rate $\mu$---represented by $(I - \mu^{-1}T_{++})^{-1}$, and
	\item doing a uniformization step at the end of an interval exponentially distributed with rate $\lambda$---represented by $(I + \lambda^{-1}T_{++}) = P_{\lambda++}$. 
	\end{itemize} 
	This alternation leads to the product $Q^m\left(I-\mu^{-1}T_{++}\right)^{-1}$ in the sum~\eqref{sum3}. Then, at $y$ by definition there is a uniformization step with a switch from the upward direction to downward for the fluid $\{\overline{X}_t\}$, which gives rise to the term $P_{\lambda+-}$.
	
	On the other hand, during the time it takes for the downward process to decrease from $y$ to $0$, we alternate between observing the process at the end of an exponential interval $[c_i, c_{i + 1})$, represented by $(I - \lambda^{-1}U)^{-1}$, and doing a uniformization step with $(I + \mu^{-1}U)=V_{\mu}$. This gives the factor $W^{m + 1}$. Thus, the case of $N$ being odd explains the first summation in~\eqref{eqn:sum3sep}. 
	
	Now, suppose $N = 2m$ for some integer $m \geq 0$. The sequence $\{c_i\}_{i =0,\dots,2m}$ mark \bleu{$2m$} exponential intervals, with rates alternating between $\mu$ and $\lambda$. 


	Figure~\ref{f:dots-even} gives a sample path for $N = 4$ and $r = 2$.  

	\begin{figure}[h!]
    		\centering
  		 \includegraphics[scale=0.7]{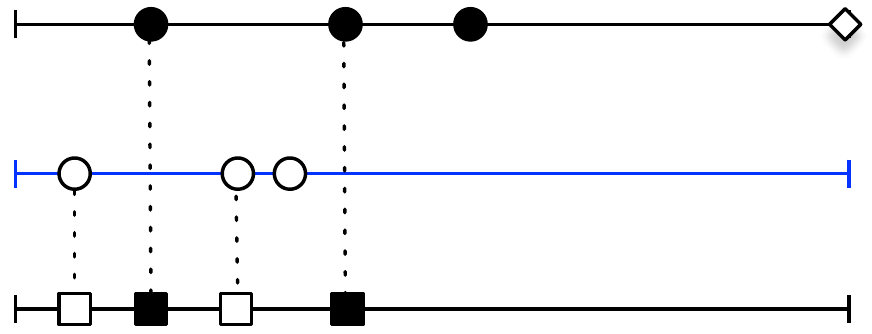}
	 \put(1,4){\makebox(0,0)[l]{$y$}}
		 \put(-186,4){\makebox(0,0)[l]{$0$}}
		 \put(1,34){\makebox(0,0)[l]{$y$}}
		 \put(-186,34){\makebox(0,0)[l]{$0$}}
		 \put(1,64){\makebox(0,0)[l]{$y$}}
		 \put(-186,64){\makebox(0,0)[l]{$0$}}
		 \put(-165,45){\makebox(0,0)[l]{$d_1$}}
		 \put(-165,-5){\makebox(0,0)[l]{$c_1$}}
		 \put(-151,74){\makebox(0,0)[l]{$u_1$}}		 		
		 \put(-151,-5){\makebox(0,0)[l]{$c_2$}}	
		 \put(-133,45){\makebox(0,0)[l]{$d_2$}}		   
		 \put(-133,-5){\makebox(0,0)[l]{$c_3$}}
		 \put(-112,74){\makebox(0,0)[l]{$u_2$}}		 
		 \put(-86,74){\makebox(0,0)[l]{$u_3$}}		 	
		 \put(-111,-5){\makebox(0,0)[l]{$c_4$}}		 
		 \put(-172,12){\makebox(0,0)[l]{$\mu$}}	
		 \put(-158,14){\makebox(0,0)[l]{$\lambda$}}	
		 \put(-140,12){\makebox(0,0)[l]{$\mu$}}			 		\put(-120,14){\makebox(0,0)[l]{$\lambda$}}		 
    \caption{A sample path of $\mathcal{U}_{\lambda}$ and $\mathcal{D}_{\mu}$, where $N = 4$ and $r = 2$.}
    \label{f:dots-even}
\end{figure}

	Similar to when $N$ is odd, \bleu{for the upward part and until $c_{2m}$,} we alternate between observing the fluid process at the end of an exponential interval and doing a uniformization step, contributing a factor $Q^m = \left((I - \mu^{-1}T_{++})^{-1}(I + \lambda^{-1}T_{++})\right)^{m}$. The last interval of the upward path, $[c_{2m}, y]$, contributes a factor $(I - \mu^{-1}T_{++})^{-1}P_{\mu + -}$, by the same computation as in~\eqref{endcount}.

	Then, for the downward record process we also alternate between observing and uniformizing for the first $2m$ intervals, which leads to $W^{m}$. We need an additional factor to account for phase changes in the downward processes between time $c_{2m}$ and time $y$: however, there are no further events of $\mathcal{D}_\mu$ after the one at time $c_{2m}$; hence the downward uniformized process makes no transitions in $[c_{2m},y]$, and the missing factor is simply the identity matrix.
	\end{proof}

\section{Connections between QBDs and Fluid Queues}
	\label{sec:rei}

Recall that a discrete-time Quasi-Birth-Death process $\{Y_n, \kappa_n\}_{n \geq 0}$ is a two-dimensional Markov process, where the phase $\{\kappa_n\}$ is a Markov chain on a finite state space $\mathcal{M}$, the level $Y_n \in \mathds{Z}$ evolves between adjacent levels according to three transition probabilities, $L_-, L_0, L_+$, as follows:
	\begin{align*} 
		\mathds{P}[Y_n = k - 1, \kappa_{n} = j \mid Y_{n - 1} = k, \kappa_{n - 1} = i] & = [L_-]_{ij}, \\
		\mathds{P}[Y_n = k + 1, \kappa_{n} = j \mid Y_{n - 1} = k, \kappa_{n - 1} = i] & = [L_+]_{ij}, \\
		\mathds{P}[Y_n = k, \kappa_{n} = j \mid Y_{n - 1} = k, \kappa_{n - 1} = i] & = [L_{0}]_{ij}.
	\end{align*}  

Let $\theta(s) := \inf\{n > 0: Y_n = s\}$ be the first time the QBD hits level~$s$. A key matrix of interest for QBDs is the so-called \emph{first downward return} matrix $\GG$, whose elements 
	\begin{align*} 
		\GG_{ij} := \mathds{P}\left[\kappa_{\theta(k - 1)} = j,\, \theta(k - 1) < \infty \mid Y_{0} = k,\, \kappa_0 = i\right]
	\end{align*} 
	are the probabilities of returning to a level below the initial starting level $k$. For convenience, we refer to this matrix as the $\GG$-matrix.  
	
	\bleu{For all QBDs introduced henceforth,} the phase process takes values in $\mathcal{S} = \mathcal{S}_{+} \cup \mathcal{S}_-$, the same state space as that of the phase process $\varphi_t$ of the fluid model $\{\overline{X}_t, \varphi_t\}$, and we partition the transition matrices between QBD levels into sub-blocks corresponding to states in $\mathcal{S}_+$ and $\mathcal{S}_-$.
	
	\begin{defn} 
	In this paper, two Quasi-Birth-Death processes are said to be \emph{$\GG$-equivalent} if they share the same $\GG$-matrix.
	\end{defn}
	
	First, we construct in Section~\ref{subsec:newQBDs} three Quasi-Birth-Death processes whose $\GG$-matrices contain the matrix $\Psi$ as one of its \bleu{sub-}blocks, based directly on the new \bleu{representations of $\Psi$ in Section~\ref{sec:new}}. Then, we show in Section~\ref{subsec:newerQBDs} that the aforementioned QBDs are respectively $\GG$-equivalent to three other processes. 
	
	In particular, the first two QBDs are $\GG$-equivalent to the QBD constructed in Ramaswami~\cite[Theorem 16]{AhnR03} and in da Silva Soares and Latouche \cite[Eqn. (17)]{soares02}, respectively. The third QBD---which will be easily seen to reduce to the first QBD, when $\mu \rightarrow \infty$, or to the second QBD, when $\lambda \rightarrow \infty$, and is thus the most general form of the three QBDs---is the underlying QBD of the doubling algorithms.

\subsection{QBDs constructed from new representations of $\Psi$} 
	\label{subsec:newQBDs}

\begin{lem} 
	\label{lem:firstrep} 
	Consider a Quasi-Birth-Death process $\{Y^{A}_n, \kappa^{A}_n\}_{n \geq 0}$ with the following transition probability matrices: 
		\begin{align*}
			A_{-1} := \left[\begin{array}{cc}
			0 & 0 \\
			0 & \left(I - \lambda^{-1}U\right)^{-1} 
			\end{array} \right], \; 
			A_0 := \left[\begin{array}{cc} 
					0 & P_{\lambda +-} \\
					0 & 0
				  \end{array}\right], \; 
			A_1 := \left[\begin{array}{cc} 
				P_{\lambda ++} & 0 \\ 
				0 & 0
			\end{array} 
			\right].
		\end{align*} 
		Then, the $\GG$-matrix of this QBD is given by
\begin{align} 
	\label{eqn:GA-Ram}
\GG_{A} & =\begin{bmatrix}
  0 & \Psi\\
  0 & \left(I- \lambda^{-1}U\right)^{-1}
\end{bmatrix}, 
\end{align}
where $\Psi$ is the minimal nonnegative solution to $\eqref{eqn:Psi}$, and the generator $U$ is given by \eqref{eqn:U}. 
\end{lem}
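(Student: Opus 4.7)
The plan is to read off each block of $\GG_A$ directly from the paths of the QBD, exploiting the sparsity of the blocks in $A_{-1}, A_0, A_1$, and then invoke the first representation~\eqref{sum1} of Theorem~\ref{theo:sums} for the only non-trivial block.

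First I would record the following absorbing structure of the QBD: in all three matrices $A_{-1},A_0,A_1$, every non-zero entry with row index in $\mathcal{S}_-$ lies in the bottom-right block of $A_{-1}$. Consequently, once the phase enters $\mathcal{S}_-$ it can never leave, and every subsequent transition strictly decreases the level. This observation immediately yields the bottom row of $\GG_A$: starting at level $n$ in a phase $j \in \mathcal{S}_-$, a single step takes the process to level $n-1$ in some phase $j' \in \mathcal{S}_-$ with transition probability $[(I-\lambda^{-1}U)^{-1}]_{jj'}$. Hence the $(-,+)$ block is $0$ and the $(-,-)$ block is $(I-\lambda^{-1}U)^{-1}$.

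Next I would handle a start in phase $i \in \mathcal{S}_+$. Before the first visit to $\mathcal{S}_-$, the process makes only upward steps governed by $P_{\lambda++}$ (via $A_1$); the only way to leave $\mathcal{S}_+$ is a same-level jump governed by $P_{\lambda+-}$ (via $A_0$). Conditioning on the number $k \geq 0$ of upward steps taken first, the process reaches level $n+k$ in $\mathcal{S}_+$ with weight $P_{\lambda++}^k$, transitions to $\mathcal{S}_-$ at the same level contributing $P_{\lambda+-}$, and then by Step~1 must make exactly $(n+k)-(n-1) = k+1$ downward steps in $\mathcal{S}_-$, each contributing a factor $(I-\lambda^{-1}U)^{-1}$. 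The $(+,+)$ block of $\GG_A$ is $0$ (the process is trapped in $\mathcal{S}_-$ once it switches), and summing over $k$ gives the $(+,-)$ block
\begin{equation*}
\sum_{k=0}^{\infty} P_{\lambda++}^k\, P_{\lambda+-}\, (I-\lambda^{-1}U)^{-(k+1)} \;=\; \Psi,
\end{equation*}
by the first identity in Theorem~\ref{theo:sums}.

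There is no substantial obstacle; the argument is essentially a bookkeeping one, and the only care required is the level accounting showing that $k$ upward steps force $k+1$ downward steps to reach $n-1$. For completeness I would remark that the $\GG$-matrix of a QBD is defined as the \emph{minimal} non-negative solution of $\GG = A_{-1} + A_0\GG + A_1\GG^2$, and that the construction above produces $\GG_A$ as a pointwise limit of partial sums of non-negative path probabilities, so minimality follows automatically (alternatively one may substitute the claimed block form into the fixed-point equation, use $G_{-+}=0$ and $G_{--}=(I-\lambda^{-1}U)^{-1}$ to reduce it to $\Psi = P_{\lambda+-}(I-\lambda^{-1}U)^{-1} + P_{\lambda++}\Psi(I-\lambda^{-1}U)^{-1}$, and verify this from~\eqref{sum1} by separating the $k=0$ term).
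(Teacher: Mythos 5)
Your proposal is correct and follows essentially the same route as the paper's own proof: a direct path decomposition conditioning on the number $k$ of upward steps in $\mathcal{S}_+$, followed by the forced $k+1$ downward steps in $\mathcal{S}_-$, and an appeal to the first representation~\eqref{sum1} to identify the $(+,-)$ block with $\Psi$. The extra remark on minimality is a harmless addition not present in the paper's argument.
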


\begin{proof}
Note that this QBD has a simple dynamic whose paths mimic directly the sum in \eqref{sum1}: starting from a phase in $ \mathcal{S}_{+}$ and level $Y^A_0 = 1$, the process 
			\begin{itemize} 
				\item stays in $\mathcal{S}_{+}$ for a certain number $k \geq 0$ of steps, each time going up one level and making a transition with $P_{\lambda ++}$, which means $Y^A_k = k+1$;
				\item then moves to $\mathcal{S}_{-}$ at the same level with transition matrix $P_{\lambda +-}$, hence $Y^{A}_{k+1} = k+1$;
				\item once having reached $\mathcal{S}_{-}$, the process cannot leave this set of states, and can only go down one level at each step, with stochastic transition matrix $\left(I - \lambda^{-1}U\right)^{-1}$, until it reaches level $0$ after $k+1$ further transitions.
			\end{itemize}
			Thus, the first passage probability matrix to a lower level, starting from a state in $\mathcal{S}_{+}$, is $\sum_{k \geq 0} P_{\lambda ++}^{k} P_{+-} \left(I - \lambda^{-1}U\right)^{-k - 1}$, which equals $\Psi$ by \eqref{sum1}. 
			
			Starting from a state in $\mathcal{S}_{-}$, the probability of going to a level lower than the initial one is $\left(I - \lambda^{-1}U\right)^{-1}$, which completes the proof. 
\end{proof}

\begin{lem} 
	\label{lem:secondrep} 
	Consider a Quasi-Birth-Death process $\{Y_n^{B}, \kappa_n^{B}\}_{n \geq 0}$ with the transition matrices 
		\begin{align*}
			B_{-1} & := \left[\begin{array}{cc} 
			0 & \left(I - \mu^{-1}T_{++}\right)^{-1} P_{\mu +-} \\
			0 & V_{\mu} 
			\end{array} \right], \; B_{0} := 0, 
			\\
			B_{1} & := \left[\begin{array}{cc} 
				\left(I - \mu^{-1}T_{++}\right)^{-1} & 0 \\
				0 & 0  
			\end{array} \right]. 
		\end{align*} 
Then, the $\GG$-matrix of this QBD is given by
\begin{align} 
	\label{eqn:GB-Guy}
\GG_B = \begin{bmatrix}
  0 & \Psi\\
  0 & V_{\mu}
\end{bmatrix},
\end{align}
where $\Psi$ is the minimal nonnegative solution to \eqref{eqn:Psi}, and the transition probability matrix $V_{\mu}$ is given by 
\eqref{eqn:Vmu}.
\end{lem}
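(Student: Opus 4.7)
The plan is to follow the same template as the proof of Lemma~\ref{lem:firstrep}, by tracing out the QBD sample paths blockwise and matching their contribution against the second representation \eqref{sum2} of Theorem~\ref{theo:sums}. First I would observe that the transition structure of $\{Y^B_n, \kappa^B_n\}$ is extremely constrained. The $(-,+)$ and $(+,+)$ blocks of $B_{-1}$, and the $(-,+), (-,-), (+,-)$ blocks of $B_1$, are all zero; combined with $B_0 = 0$, this means that a path starting in $\mathcal{S}_+$ can either take an upward step remaining in $\mathcal{S}_+$ (with transition matrix $(I-\mu^{-1}T_{++})^{-1}$) or a downward step crossing into $\mathcal{S}_-$ (with transition matrix $(I-\mu^{-1}T_{++})^{-1}P_{\mu+-}$), while a path in $\mathcal{S}_-$ can only take a downward step within $\mathcal{S}_-$ (transition matrix $V_\mu$). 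In particular, $\mathcal{S}_-$ is absorbing for the phase process, and from $\mathcal{S}_-$ the level decreases at every step.

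These observations immediately settle three of the four blocks of $\GG_B$. Starting from $\kappa^B_0 \in \mathcal{S}_-$ at level $k$, the process reaches level $k-1$ after exactly one step, and the phase transition is $V_\mu$; this gives $[\GG_B]_{-,-} = V_\mu$ and $[\GG_B]_{-,+} = 0$. Starting from $\kappa^B_0 \in \mathcal{S}_+$, the path must at some point make the single crossing into $\mathcal{S}_-$ and then remain there, so the first state at level $k-1$ lies in $\mathcal{S}_-$; this forces $[\GG_B]_{+,+} = 0$.

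The substantive block is $[\GG_B]_{+,-}$. Decompose sample paths from $(\mathcal{S}_+, k)$ to the first visit to level $k-1$ according to the number $n \ge 0$ of upward $\mathcal{S}_+$-steps taken before the crossing step. Such a path consists of $n$ upward steps contributing $[(I-\mu^{-1}T_{++})^{-1}]^n$ to the phase transition, one downward crossing step contributing $(I-\mu^{-1}T_{++})^{-1}P_{\mu+-}$, and then $n$ further downward $\mathcal{S}_-$-steps contributing $V_\mu^n$, so that the net level change is $n+(-1)+(-n) = -1$ as required. Multiplying these block factors in order yields the contribution
\[
(I-\mu^{-1}T_{++})^{-n-1}\,P_{\mu+-}\,V_\mu^n,
\]
which is precisely the summand appearing in \eqref{sum2}. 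Summing over $n \ge 0$ and invoking Theorem~\ref{theo:sums} identifies $[\GG_B]_{+,-}$ with $\Psi$, and assembling the four blocks gives the claimed form \eqref{eqn:GB-Guy}.

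The only step that requires care is the exponent bookkeeping in the last paragraph: one must ensure that the factor $(I-\mu^{-1}T_{++})^{-1}$ picked up at the crossing step is correctly counted together with the $n$ factors from the purely upward segment, so that the total exponent is $-(n+1)$ rather than $-n$. This matches \eqref{sum2} exactly; no other subtlety arises because the $\mathcal{S}_-$ chain is trivially diagonal in the level coordinate.
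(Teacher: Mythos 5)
Your proposal is correct and follows essentially the same route as the paper's proof: both decompose sample paths by the number $n$ of upward steps within $\mathcal{S}_+$ before the single crossing into $\mathcal{S}_-$, read off the blockwise contribution $(I-\mu^{-1}T_{++})^{-n-1}P_{\mu+-}V_\mu^n$, and identify the sum with $\Psi$ via \eqref{sum2}, handling the $\mathcal{S}_-$ starting case by noting the forced one-step descent with $V_\mu$. Your explicit verification of the three zero/trivial blocks is a minor elaboration of what the paper leaves implicit, not a different argument.
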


\begin{proof} Similar to the QBD in Lemma~\ref{lem:firstrep}, the QBD here has a simple dynamic, whose paths mimic directly the sum in~\eqref{sum2}. Starting with $\kappa_0^{B} \in \mathcal{S}_{+}$ and $Y_{0}^{B} = 1$, the process 
			\begin{itemize} 
				\item stays in a state in $\mathcal{S}_{+}$ for a certain number $n \geq 0$ of steps, each with probability matrix $(I - \mu^{-1}T_{++})^{-1}$, which means $Y_{n}^{B} = n + 1$;
				\item then moves to a state in $\mathcal{S}_{-}$ and goes down a level (and hence $Y_{n + 1}^{B} = n$), according to $\left(I - \mu^{-1}T_{++}\right)^{-1} P_{\mu +-}$; 
				\item from now on, the process stays in $\mathcal{S}_{-}$, going down one level with stochastic transition matrix $V_{\mu}$ at each time step and reaching level $0$ after $n$ further transitions. 
			\end{itemize} 
			Thus, the first passage probability matrix to a lower level, starting from a state in $\mathcal{S}_{+}$, is $\sum_{n \geq 0} \left(I - \mu^{-1}T_{++}\right)^{-n - 1}P_{\mu +-} V_{\mu}^n$, which is $\Psi$ by \eqref{sum2}. 
			
			Starting from a state in $\mathcal{S}_{-}$, the probability of going to a level lower than the initial one is $V_{\mu}$, which completes the proof. 
\end{proof} 

\begin{lem} 
	\label{lem:thirdrep} 
	Consider a Quasi-Birth-Death process $\{Y_n^{C}, \kappa_n^{C}\}_{n \geq 0}$ with the transition matrices 
	 \begin{align*}
	C_{-1} & := \left[\begin{array}{cc} 
	0 & {\left(I - \mu^{-1} T_{++}\right)^{-1}P_{\mu {+-}}} \\ 
	0 & W \end{array}\right], \; 
	C_{0} := \left[\begin{array}{cc} 
	0 & (I - \mu^{-1}T_{++})^{-1}P_{\lambda +-} \\ 
	0 & 0 
	\end{array} \right], \\
	 C_{1} & := \left[\begin{array}{cc} 
	 	\left(I - \mu^{-1}T_{++}\right)^{-1}P_{\lambda ++} & 0 \\
		0 & 0 
	 \end{array} \right].
	\end{align*} 

	Then, its $\GG$-matrix is given by
\begin{align} \label{eqn:GC}
\GG_C = \begin{bmatrix}
  0 & \Psi\\
  0 & W
\end{bmatrix},
\end{align}
where $W$ is defined in $\eqref{eqn:W}$. 
\end{lem}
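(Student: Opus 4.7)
The plan is to mirror the operational arguments used for Lemma~\ref{lem:firstrep} and Lemma~\ref{lem:secondrep}: I would interpret the blocks of $C_{-1}, C_0, C_1$ directly as the elementary moves of the QBD, enumerate the sample paths that realise a first downward passage from level~$1$, and match the resulting sum with the representation~\eqref{sum3}. The key structural observation is that all $-+$ sub-blocks of $C_{-1}, C_0, C_1$ vanish, so once the process leaves $\mathcal{S}_+$ it stays in $\mathcal{S}_-$ forever; moreover, inside $\mathcal{S}_-$ the only non-zero block is the $--$ block $W$ of $C_{-1}$, so the process deterministically descends one level per step via $W$. This makes the enumeration effectively one-dimensional.

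Starting from $(Y_0^C, \kappa_0^C) = (1, i)$ with $i \in \mathcal{S}_+$, the process first takes some $m \geq 0$ upward steps in $\mathcal{S}_+$, each contributing the $++$ block of $C_1$, which is exactly $(I - \mu^{-1}T_{++})^{-1} P_{\lambda ++} = Q$, and raising the level by one. The $+\to -$ transition then happens in exactly one of two ways. Either it occurs through $C_0$, which keeps the current level and contributes $(I - \mu^{-1}T_{++})^{-1} P_{\lambda + -}$, after which the process has to descend $m + 1$ further levels using $W$; or it occurs through $C_{-1}$, which decreases the level by one and contributes $(I - \mu^{-1}T_{++})^{-1} P_{\mu + -}$, after which only $m$ further $W$-steps are needed to reach level $0$. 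Summing over $m \geq 0$ over the two cases gives
\begin{align*}
\sum_{m=0}^{\infty} Q^m (I - \mu^{-1}T_{++})^{-1} P_{\lambda + -} W^{m+1} + \sum_{m=0}^{\infty} Q^m (I - \mu^{-1}T_{++})^{-1} P_{\mu + -} W^m,
\end{align*}
which is exactly the decomposition~\eqref{eqn:sum3sep}; by Theorem~\ref{theo:sums} this sum equals $\Psi$, identifying the top-right block of $\GG_C$.

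Starting from $(1, i)$ with $i \in \mathcal{S}_-$, the process reaches level $0$ in a single step governed by $W$, so the bottom-right block of $\GG_C$ is $W$. The left column of $\GG_C$ vanishes because none of $C_{-1}, C_0, C_1$ admits any transition back into $\mathcal{S}_+$. There is no genuine obstacle here; the one point deserving care is the bijective matching of the two ways of switching to $\mathcal{S}_-$ (via the level-preserving block $C_0$ versus the down-step block $C_{-1}$) with the two summands of~\eqref{eqn:sum3sep}, and this is immediate once one notes the placement of $P_{\lambda + -}$ in $C_0$ versus $P_{\mu + -}$ in $C_{-1}$. In particular, the proof requires no additional algebraic work beyond invoking Theorem~\ref{theo:sums}.
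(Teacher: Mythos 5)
Your proposal is correct and follows essentially the same path-decomposition argument as the paper: $m$ upward steps via $(I-\mu^{-1}T_{++})^{-1}P_{\lambda++}$, then a switch to $\mathcal{S}_-$ either at the same level (via $C_0$, contributing $(I-\mu^{-1}T_{++})^{-1}P_{\lambda+-}$ followed by $m+1$ descents with $W$) or one level down (via $C_{-1}$, contributing $(I-\mu^{-1}T_{++})^{-1}P_{\mu+-}$ followed by $m$ descents), matching the two summands of~\eqref{eqn:sum3sep} and hence $\Psi$ by Theorem~\ref{theo:sums}. Your explicit remarks on the vanishing $-+$ blocks and the single-step $W$ descent from $\mathcal{S}_-$ are implicit in the paper's version but add nothing structurally new.
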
 

\begin{proof} 
	The QBD $\{Y_n^{C}, \kappa_n^{C}\}_{n \geq 0}$ has sample paths that mimic the dynamics of the sum \eqref{sum3}. Starting from $\mathcal{S}_{+}$ and level $1$, the process 
	\begin{itemize}
		\item stays in a state in $\mathcal{S}_+$ for a certain number $m\geq 0$ of steps, each with probability matrix $\left(I - \mu^{-1}T_{++}\right)^{-1}P_{\lambda ++}$, which means that $Y_m^C = m+1$;
	\item then moves to the set $\mathcal{S}_{-}$, either moving down a level, with transition matrix $\left(I - \mu^{-1}T_{++}\right)^{-1}P_{\mu +-}$, or staying in the same level with $\left(I - \mu^{-1}T_{++}\right)^{-1}P_{\lambda +-}$. In the former case, $Y_{m+1}^C=m$, while in the latter  $Y_{m+1}^C = m + 1$; 
	\item having reached $\mathcal{S}_{-}$, the process stays there and reaches level $0$ after either $m$ or $m + 1$ transitions, depending on the choice made in the previous point. 
	\end{itemize} 
	
	Summing up the contributions of these paths we get the expression for $\Psi$ in \eqref{sum3}. 
	
	Thus, it is clear that the $\GG$-matrix for the QBD is given by $\GG_C$. 
\end{proof} 

\subsection{Connections to existing QBDs and doubling algorithms} 
	\label{subsec:newerQBDs}


\bleu{Ahn and Ramaswami \cite[\bleu{Theorem 16}]{AhnR03} constructed a Quasi-Birth-Death process $\{Y_n^{\Delta}, \kappa_n^{\Delta}\}_{n \geq 0}$ with the following transition probability matrices: 
\begin{align} \label{eqn:qbdram} 
\Delta_{-1} & := \frac12\begin{bmatrix}
  0 & 0\\
  0 & I
\end{bmatrix}, \;
\Delta_0 := \begin{bmatrix}
 0 &  P_{\lambda+-}\\
  0 & \frac12 P_{\lambda--}
\end{bmatrix}, \;
\Delta_1 := \begin{bmatrix}
  P_{\lambda++} & 0\\
  \frac12 P_{\lambda-+} & 0
\end{bmatrix},
\end{align}
and showed that the $\GG$-matrix of this QBD has $\Psi$ as its top-right sub-block.}
\begin{theorem}
	\label{theo:qbd-1}
Consider a Quasi-Birth-Death process $\{Y^{A'}_n, \kappa^{A'}_n\}_{n \geq 0}$ with the following transition probability matrices: 
\begin{align} 
		\label{qbdrammod}
A'_{-1} & := \frac12\begin{bmatrix}
  0 & 0\\
  0 & I
\end{bmatrix}, \;
A'_0 := \frac12\begin{bmatrix}
  I & P_{\lambda+-}\\
  0 & P_{\lambda--}
\end{bmatrix}, \;
A'_1 := \frac12\begin{bmatrix}
  P_{\lambda++} & 0\\
  P_{\lambda-+} & 0
\end{bmatrix}.
\end{align}

\bleu{The process $\{Y^{A'}_n, \kappa^{A'}_n\}_{n \geq 0}$ is $\GG$-equivalent to the QBD $\{Y_n^{\Delta}, \kappa_n^{\Delta}\}_{n \geq 0}$  and to the QBD $\{Y_n^{A}, \kappa_n^{A}\}$ introduced in Lemma~\ref{lem:firstrep}.}
\end{theorem}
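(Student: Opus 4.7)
The plan is to exploit the transitivity of $\GG$-equivalence, proving separately that $\GG_{A'}=\GG_\Delta$ and $\GG_{A'}=\GG_A$, where $A$ is the QBD of Lemma~\ref{lem:firstrep}. The first equivalence admits a short probabilistic argument via self-loop insertion, while the second will be obtained by direct substitution into the quadratic fixed-point equation combined with a minimality check.

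For the equivalence with $\Delta$, I would compare the transition blocks row by row. The rows of $A'_{-1},A'_0,A'_1$ indexed by $\mathcal{S}_-$ coincide with the corresponding rows of $\Delta_{-1},\Delta_0,\Delta_1$. On rows indexed by $\mathcal{S}_+$, the matrices of $A'$ are obtained from those of $\Delta$ by halving every outgoing transition and compensating with a self-loop of mass $\frac12 I$ inserted into the $(+,+)$ block of $A'_0$. The trajectories of $A'$ are therefore exactly those of $\Delta$, with a geometrically distributed number of idle steps added whenever the current phase belongs to $\mathcal{S}_+$. Since these self-loops alter neither the level nor the phase, they cannot influence the first passage to a lower level, so $\GG_{A'}=\GG_\Delta$.

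For the equivalence with $A$, I would substitute the closed form of $\GG_A$ from Lemma~\ref{lem:firstrep} into the equation $\GG = A'_{-1}+A'_0 \GG +A'_1 \GG^2$ and verify the four block identities. The two left-column blocks vanish by inspection. The $(+,-)$ block, after multiplication by $I-\lambda^{-1}U$ on the right and expansion of $P_{\lambda++}=I+\lambda^{-1}T_{++}$ and $P_{\lambda+-}=\lambda^{-1}T_{+-}$, reduces exactly to the unit-rate form of the NARE~\eqref{eqn:Psi}, namely $T_{+-}+T_{++}\Psi+\Psi T_{--}+\Psi T_{-+}\Psi=0$. The $(-,-)$ block reduces analogously to the identity $P_{\lambda--}+P_{\lambda-+}\Psi=I+\lambda^{-1}U$, which follows directly from the definition $U=T_{--}+T_{-+}\Psi$ together with $P_\lambda=I+\lambda^{-1}T$. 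Hence $\GG_A$ is a nonnegative solution of the $A'$ equation, so $\GG_{A'}\le \GG_A$ by minimality of the probabilistic $\GG$-matrix.

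The main obstacle is the reverse inequality $\GG_{A'}\ge \GG_A$. I would obtain it by invoking the chain $\GG_{A'}=\GG_\Delta$ together with the Ahn--Ramaswami identification of $\Psi$ as the $(+,-)$ block of $\GG_\Delta$, pinning down $[\GG_{A'}]_{+,-}=\Psi=[\GG_A]_{+,-}$; the left column vanishes automatically from inspection of $A'_{-1}$. For the $(-,-)$ block I would censor in $A'$ the time spent in $\mathcal{S}_+$ after starting at $(k,i)$ with $i\in\mathcal{S}_-$: every excursion into $\mathcal{S}_+$ either eventually returns to level $k$ in $\mathcal{S}_-$ via the $\Psi$-mechanism or completes a downward first passage, and summing the induced one-step-down probabilities produces the geometric series $\sum_{m\ge 0}\lambda^{-m}U^m=(I-\lambda^{-1}U)^{-1}$, matching $[\GG_A]_{-,-}$. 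The two inequalities together yield $\GG_{A'}=\GG_A$ and, by transitivity, all three equivalences.
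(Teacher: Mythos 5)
Your proposal is correct. The first half (the $\GG$-equivalence of $\{Y_n^{A'},\kappa_n^{A'}\}$ with $\{Y_n^{\Delta},\kappa_n^{\Delta}\}$ via inserting a self-loop of mass $\tfrac12 I$ in the $(+,+)$ block and halving the remaining outgoing mass from $\mathcal{S}_+$) is exactly the paper's argument. For the second half you diverge: the paper stays entirely probabilistic, passing through the intermediate QBD~\eqref{unnamedA} and using the two path transformations ``replace an immediate descent $(I-\lambda^{-1}U)^{-1}$ by a geometric number of same-level $\tfrac12 V_\lambda$ steps followed by $\tfrac12 I$'' and ``replace $\tfrac12 V_\lambda$ by $\tfrac12 P_{\lambda--}$ or $\tfrac12 P_{\lambda-+}$ followed by a $\Psi$-excursion,'' whereas you verify algebraically that $\GG_A$ from~\eqref{eqn:GA-Ram} solves $\GG=A'_{-1}+A'_0\GG+A'_1\GG^2$ (your block reductions to the unit-rate form of~\eqref{eqn:Psi} and to $P_{\lambda--}+P_{\lambda-+}\Psi=I+\lambda^{-1}U$ both check out), deduce $\GG_{A'}\leq\GG_A$ by minimality, and then establish the reverse by computing $\GG_{A'}$ blockwise through $\GG_\Delta$ and a censoring argument. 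Your censored chain on $\mathcal{S}_-$ at a fixed level is in fact the paper's QBD~\eqref{unnamedA} rediscovered, so the two routes are close cousins; what your version buys is an explicit link to the fixed-point characterization of $\GG$, at the cost of some redundancy (once the blockwise computation of $\GG_{A'}$ is complete, the substitution-plus-minimality step is no longer needed). One small correction: the geometric series produced by the censoring is $\sum_{m\geq 0}\bigl(\tfrac12 V_\lambda\bigr)^m\tfrac12 I=(I-\lambda^{-1}U)^{-1}$, not $\sum_{m\geq 0}\lambda^{-m}U^m$; the latter is not a sum of nonnegative (sub)stochastic terms since $U$ is a subgenerator, even though it formally sums to the same matrix.
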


\begin{proof}
\bleu{The first part is straightforward, the only difference between the dynamics of the two QBDs is in the transitions from a state in $\mathcal{S}_{+}$. For each step, with equal probabilities the process $\{Y_{n}^{A'}, \kappa_n^{A'}\}$ either 
	\begin{itemize} 
		\item follows the same dynamics as those in the QBD $\{Y_n^{\Delta}, \kappa_n^{\Delta}\}$, \emph{or} 
		\item makes a transition to the same state and the same level, that is, stays exactly where it is. 
	\end{itemize} 
	While this difference means the expected time for $\{Y_{n}^{A'}, \kappa_n^{A'}\}$ to descend one level is slower than that of $\{Y_{n}^{\Delta}, \kappa_n^{\Delta}\}$, the two processes have the same conditional probabilities of doing so, and thus the same $\GG$-matrix.}  
	
	We now show that $\{Y_{n}^{A}, \kappa_n^{A}\}_{n \geq 0}$
	 is $\GG$-equivalent to a QBD process with transition matrices 
				\begin{equation} \label{unnamedA}
				\mbox{Down} := \left[\begin{array}{cr} 
				   0 & 0 \\
				   0 & \frac{1}{2} I 
				\end{array} \right], 
				\; \mbox{Same} := \left[\begin{array}{cc} 
					0 & P_{\lambda +-} \\
					0 & \frac{1}{2} V_{\lambda}
				\end{array} \right], 
				\;  \mbox{Up} := \left[\begin{array}{cc} 
				  P_{\lambda ++} & 0 \\
				  0 & 0 
				\end{array} \right],
				\end{equation}
	with $V_{\lambda} := I + {\lambda}^{-1} U$. Indeed, the two processes have the same dynamics, apart from their behaviour in $\mathcal{S}_-$. Starting from a state in $\mathcal{S}_-$, the process $\{Y_{n}^A, \kappa_n^{A}\}$ moves one level down immediately, and moves to a state in $\mathcal{S}_-$ with transition matrix $V_{\lambda}$, whereas the process in~\eqref{unnamedA}
	\begin{itemize}
		\item makes an arbitrary number $\ell \geq 0$ of same-level transitions within $\mathcal{S}_-$, each with transition matrix $\frac12 V_\lambda$;
		\item then finally moves down one level, again within $\mathcal{S}_-$, with transition matrix $\frac12 I$.
	\end{itemize}
	However, these dynamics also produce an eventual decrease by one level with transition matrix $V_\lambda$, since
					\begin{align*} 
						\left(I - \frac{1}{\lambda}U\right)^{-1} = \frac{1}{2}\left(I - \frac{1}{2}V_{\lambda}\right)^{-1} = \sum_{\ell = 0}^{\infty} \left(\frac{1}{2}V_{\lambda}\right)^{\ell} \left(\frac{1}{2}I\right). 
					\end{align*} 
	Hence the two processes have the same $\GG$-matrix, since this matrix depends only on the probabilities of reaching a certain level and state, not on the time taken.

	We now use a similar argument to show that the process with transition matrices~\eqref{unnamedA} is $\GG$-equivalent to $\{Y_n^{\Delta}, \kappa_n^{\Delta}\}$. Indeed, these two process have the same dynamics, apart from the fact that in the latter a same-level transition with probability $\frac12 V_\lambda$ is replaced by
	\begin{itemize}
		\item either a transition to the same level \bleu{and into a state in $\mathcal{S}_{-}$} with probability $\frac{1}{2} P_{\lambda --}$, or  
		\item a transition to the set $\mathcal{S}_{+}$ and up one level with matrix $\frac{1}{2}P_{\lambda - +}$.
	\end{itemize}
	However, we have already established that, starting from a state in $\mathcal{S}_+$ at a certain level $\ell$, the process eventually reaches level $\ell-1$ in a state $\mathcal{S}_-$ with transition probability matrix $\Psi$. Hence both dynamics eventually return to the same level (for the first time), in $\mathcal{S}_-$, with the same transition probability matrix, since $V_{\lambda} = P_{\lambda --} + P_{\lambda -+} \Psi$. Again, this change does not affect the value of~$\mathcal{G}$.
	\end{proof}
					
%

Da Silva Soares and Latouche \cite[Eqn. (17)]{soares02} constructed a Quasi-Birth-Death process $\{Y^{\Theta}_n, \kappa^{\Theta}_n\}_{n \geq 0}$ with the following transition probability matrices:
\begin{align} \label{qbdguy}
\Theta_{-1} & := \begin{bmatrix}
  0 & \frac12 P_{\mu+-}\\
  0 & P_{\mu--}
\end{bmatrix},  \; 
\Theta_0 := \begin{bmatrix}
  \frac12P_{\mu++} & 0\\
  P_{\mu-+} & 0
\end{bmatrix},  \; 
\Theta_1 := \frac12\begin{bmatrix}
  I & 0\\
  0 & 0
\end{bmatrix},
\end{align}
and showed that their $\GG$-matrix is also given by~\eqref{eqn:GB-Guy}.

\begin{theorem}
	\label{theo:qbd-2}
Consider a Quasi-Birth-Death process $\{Y^{B'}_n, \kappa^{B'}_n\}_{n \geq 0}$ with transition probability matrices: 
\begin{align} 
		\label{qbdguymod}
B_{-1}' & := \frac12\begin{bmatrix}
  0 &  P_{\mu+-}\\
  0 & P_{\mu--}
\end{bmatrix},  \; 
B_0' := \frac12\begin{bmatrix}
  P_{\mu++} & 0\\
  P_{\mu-+} & I
\end{bmatrix},  \; 
B_1' := \frac12\begin{bmatrix}
  I & 0\\
  0 & 0
\end{bmatrix}.
\end{align}

The process $\{Y^{B'}_n, \kappa^{B'}_n\}_{n \geq 0}$ is equivalent to the QBD $\{Y_n^{\Theta}, \kappa_n^{\Theta}\}_{n \geq 0}$  and to the QBD $\{Y_n^{B}, \kappa_n^{B}\}$ introduced in Lemma~\ref{lem:secondrep}.

\end{theorem}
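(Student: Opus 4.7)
The plan is to follow the two-step template established in the proof of Theorem~\ref{theo:qbd-1}: I will first show that the QBD $\{Y^{B'}_n, \kappa^{B'}_n\}$ is $\GG$-equivalent to $\{Y^\Theta_n, \kappa^\Theta_n\}$, and then that it is $\GG$-equivalent to $\{Y^B_n, \kappa^B_n\}$ from Lemma~\ref{lem:secondrep}. Both steps will be carried out by censoring same-level (``lazy'') transitions that do not alter the first-passage-down probabilities.

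For the first equivalence, I would inspect the two sets of transition matrices block by block. Comparing \eqref{qbdguy} and \eqref{qbdguymod}, one sees that all transition blocks out of $\mathcal{S}_+$ coincide: both $B'$ and $\Theta$ move up with $\frac12 I$, same-level in $\mathcal{S}_+$ with $\frac12 P_{\mu++}$, and down to $\mathcal{S}_-$ with $\frac12 P_{\mu+-}$. The sole difference lies in $\mathcal{S}_-$: the two blocks $P_{\mu-+}$ and $P_{\mu--}$ of $\Theta$ are each halved in $B'$, compensated by a same-level self-loop $\frac12 I$ appearing in the bottom-right of $B'_0$. This is precisely the lazy-loop transformation already exploited in Theorem~\ref{theo:qbd-1}: at each visit to $\mathcal{S}_-$, $B'$ stays put with probability $\frac12$ and otherwise takes a $\Theta$-step. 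Summing the geometric series of lazy self-loops shows that the conditional distribution of the next non-lazy move in $B'$ coincides with $\Theta$'s one-step distribution out of $\mathcal{S}_-$, so the first-passage-down probabilities, and hence the $\GG$-matrices, agree.

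For the second equivalence, I would censor the same-level transitions of $B'$ in $\mathcal{S}_+$ and in $\mathcal{S}_-$ separately. From $\mathcal{S}_+$, $B'$ makes a geometric number of $\frac12 P_{\mu++}$ self-transitions before committing to either an up-move $\frac12 I$ or a down-to-$\mathcal{S}_-$ move $\frac12 P_{\mu+-}$; summing the series yields $\left(I - \tfrac12 P_{\mu++}\right)^{-1} = 2\left(I - \mu^{-1}T_{++}\right)^{-1}$, so the censored up- and down-blocks become $(I - \mu^{-1}T_{++})^{-1}$ and $(I - \mu^{-1}T_{++})^{-1}P_{\mu+-}$, matching $B_1$ and $B_{-1}$ exactly. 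From $\mathcal{S}_-$, $B'$ executes a geometric number of $\frac12 I$ self-loops, then either descends directly with $\frac12 P_{\mu--}$ or crosses to $\mathcal{S}_+$ with $\frac12 P_{\mu-+}$ and returns to the level below via the first-passage-down matrix from $\mathcal{S}_+$, which by the first equivalence combined with \eqref{eqn:GB-Guy} equals $\Psi$. A geometric-series summation then gives
\begin{equation*}
\left(I - \tfrac12 I\right)^{-1}\left(\tfrac12 P_{\mu--} + \tfrac12 P_{\mu-+}\Psi\right) = P_{\mu--} + P_{\mu-+}\Psi = V_\mu,
\end{equation*}
which is precisely the bottom-right block of $B_{-1}$. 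The two QBDs therefore have matching first-passage-down matrices and the same $\GG$-matrix.

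The main obstacle to watch out for is circularity in the $\mathcal{S}_-$ analysis: the first-passage-down from $\mathcal{S}_-$ in $B'$ is expressed in terms of the first-passage-down from $\mathcal{S}_+$, namely $\Psi$, which is itself part of what we wish to determine for $B'$. Carrying out the $B' \sim \Theta$ step first---using only the elementary lazy-loop observation that does not require $\Psi$---and then citing the known form of $\GG_\Theta$ from \cite{soares02} supplies $\Psi$ for use in the second step without any circular reasoning.
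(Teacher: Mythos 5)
Your proposal is correct and follows essentially the same route as the paper: the lazy-loop censoring argument for $B'\sim\Theta$, the geometric-series identity $\left(I-\tfrac12 P_{\mu++}\right)^{-1}\tfrac12 = \left(I-\mu^{-1}T_{++}\right)^{-1}$ for the $\mathcal{S}_+$ transitions, and the identity $V_\mu = P_{\mu--}+P_{\mu-+}\Psi$ together with the known first-passage-down matrix $\Psi$ from $\mathcal{S}_+$ for the $\mathcal{S}_-$ transitions. The only (immaterial) difference is bookkeeping: you censor $B'$ directly until it matches $B$, whereas the paper routes both $B'$ and $B$ through the intermediate chain~\eqref{unnamedB} and $\Theta$; your explicit remark on avoiding circularity by establishing $B'\sim\Theta$ first is a nice touch that the paper leaves implicit.
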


\begin{proof} This proof follows closely the one of Theorem~\ref{theo:qbd-1}.
		For the first part, note that the only difference between the dynamics of the two QBDs is in the transition matrices from a state in $\mathcal{S}_{-}$. With equal probabilities, our QBD $\{Y_{n}^{B'}, \kappa_n^{B'}\}$ either follows the same transition probabilities in their QBD, or makes a transition to the same state and the same level, that is, stays exactly where it is. Intuitively, while this slows down the expected time to go down one level, it does not affect the probability of doing so, and thus does not change the $\GG$-matrix.
	
		We now show that $\{Y_n^{B}, \kappa_n^B\}$ is $\GG$-equivalent to a QBD process with transition matrices
			\begin{equation} \label{unnamedB}
				\mbox{Down} := \left[\begin{array}{cc} 
				0 & \frac{1}{2}P_{\mu +-} \\ 
				0 & V_{\mu} 
				\end{array} \right], 
				\; \mbox{Same} := \left[\begin{array}{cc} 
				\frac{1}{2}P_{\mu ++} & 0 \\ 
				0 & 0 
				\end{array} \right], 
				\; \mbox{Up} := \left[\begin{array}{cc} 
					\frac{1}{2} I & 0 \\
					0 & 0 
				\end{array}\right].
			\end{equation}
		Indeed, the two processes have the same dynamics, apart from the fact that (starting from a state in $\mathcal{S}_+$), instead of going one level up into $\mathcal{S}_+$ with transition matrix $\left(I - \mu^{-1}T_{++}\right)^{-1}$, the new process will
		\begin{itemize}
			\item make a certain number $\ell \geq 0$ of same-level transitions, each with probability matrix $\frac12 P_{\mu ++}$, and then
			\item make an upward transition with probability matrix $\frac12 I$.
		\end{itemize}
		Nevertheless, both dynamics produce an increase by one level with the same transition probability matrix, since
		\begin{align*} 
			\left(I - \mu^{-1}T_{++}\right)^{-1} = \sum_{\ell = 0}^{\infty} \left(\frac{1}{2}P_{\mu ++} \right)^{\ell} \frac{1}{2}I. 
		\end{align*}
	Next, we show that the process~\eqref{unnamedB} is $\GG$-equivalent to $\{Y_n^{\Theta}, \kappa_n^{\Theta}\}$. The only difference between the two is that a downward transition from $\mathcal{S}_-$ to itself with transition matrix $V_{\mu}$ is replaced by
	\begin{itemize} 
		\item either a transition within $\mathcal{S}_{-}$ and down one level, with $P_{\mu --}$, or, 
		\item a transition from $\mathcal{S}_-$ to $\mathcal{S}_{+}$ at the same level, with $P_{\mu - +}$.
	\end{itemize}
	Again, starting from $\mathcal{S}_+$ the process eventually decreases by one level with transition matrix $\Psi$, so both processes produce a decrease by one level with the same transition matrix, since $V_\mu = P_{\mu--} + P_{\mu-+}\Psi$. This change does not affect the matrix $\GG$.
\end{proof}

\begin{theorem}
	\label{theo:qbd-3}
Consider a Quasi-Birth-Death process $\{Y_n^{C'}, \kappa_n^{C'}\}_{n \geq 0}$ with the following transition matrices: 
\begin{align} 
		\label{qbdgen}
C_{-1}' & := \frac12\begin{bmatrix}
  0 &  P_{\mu+-}\\
  0 & P_{\mu--}
\end{bmatrix}, \;  C_0' := \frac12\begin{bmatrix}
  P_{\mu++} & P_{\lambda+-}\\
  P_{\mu-+} & P_{\lambda--}
\end{bmatrix}, \;
C_1' := \frac12\begin{bmatrix}
  P_{\lambda++} & 0\\
  P_{\lambda-+} & 0
\end{bmatrix}.
\end{align}

Then, this process is equivalent to the QBD $\{Y_n^C, \kappa_n^C\}$; that is, its $\GG$-matrix is also given by~\eqref{eqn:GC}.
\end{theorem}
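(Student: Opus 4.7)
I would follow the template of Theorems~\ref{theo:qbd-1} and~\ref{theo:qbd-2}: introduce an intermediate QBD $\{Y_n^{C''}, \kappa_n^{C''}\}$ and establish two $\GG$-equivalences $\{Y_n^C,\kappa_n^C\} \sim \{Y_n^{C''},\kappa_n^{C''}\} \sim \{Y_n^{C'},\kappa_n^{C'}\}$. A natural choice is
\[
C''_{-1} := \frac{1}{2}\begin{bmatrix} 0 & P_{\mu+-} \\ 0 & V_\mu \end{bmatrix}, \quad
C''_0 := \frac{1}{2}\begin{bmatrix} P_{\mu++} & P_{\lambda+-} \\ 0 & V_\lambda \end{bmatrix}, \quad
C''_1 := \frac{1}{2}\begin{bmatrix} P_{\lambda++} & 0 \\ 0 & 0 \end{bmatrix},
\]
where $V_\lambda := I + \lambda^{-1}U$. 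Thus $C''$ retains the $\mathcal{S}_+$-unfolded structure of $C'$ while keeping the aggregated $V_\mu$ and $V_\lambda$ transitions inside $\mathcal{S}_-$ as in $C$.

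For $C \sim C''$, every transition out of $\mathcal{S}_+$ in $C$ carries the factor $(I - \mu^{-1}T_{++})^{-1}$, and one has the identity $(I-\mu^{-1}T_{++})^{-1} = \sum_{\ell\geq 0} (\tfrac{1}{2}P_{\mu++})^{\ell} \cdot \tfrac{1}{2}I$. I would read this probabilistically as follows: from $\mathcal{S}_+$, the process $C''$ performs a geometric number of same-level self-loops with matrix $\tfrac{1}{2}P_{\mu++}$, then one productive transition of total mass $\tfrac{1}{2}$ allocated among $\tfrac{1}{2}P_{\lambda++}$ (up), $\tfrac{1}{2}P_{\lambda+-}$ (same-level into $\mathcal{S}_-$) and $\tfrac{1}{2}P_{\mu+-}$ (down); summing over $\ell$ reproduces the $\mathcal{S}_+$ rows of $C_{\pm 1}, C_0$. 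For $\mathcal{S}_-$, the one-step matrix $W$ of $C$ is realized in $C''$ as a geometric number of same-level self-loops with $\tfrac{1}{2}V_\lambda$ followed by a down-step $\tfrac{1}{2}V_\mu$, whose aggregate is
\[
\sum_{\ell\geq 0}\bigl(\tfrac{1}{2}V_\lambda\bigr)^\ell \cdot \tfrac{1}{2}V_\mu \;=\; 2(I-\lambda^{-1}U)^{-1}\cdot\tfrac{1}{2}V_\mu \;=\; (I-\lambda^{-1}U)^{-1}V_\mu \;=\; W,
\]
where I used that $V_\mu$ and $(I-\lambda^{-1}U)^{-1}$ are both polynomials in $U$ and therefore commute. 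Since the replacements only alter the number of steps, not the probabilities of reaching a given lower state, the two $\GG$-matrices agree.

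For $C'' \sim C'$, the $\mathcal{S}_+$ rows of $C''$ and $C'$ already coincide, so only the $\mathcal{S}_-$ behaviour needs comparison. Using the factorizations $V_\lambda = P_{\lambda--} + P_{\lambda-+}\Psi$ and $V_\mu = P_{\mu--} + P_{\mu-+}\Psi$, which follow from \eqref{eqn:U}, I would interpret each $\tfrac{1}{2}V_\lambda$ same-level step of $C''$ as being split in $C'$ into either a direct same-level step $\tfrac{1}{2}P_{\lambda--}$ or an up-step $\tfrac{1}{2}P_{\lambda-+}$ followed by a first-descent from $\mathcal{S}_+$ (contributing the factor $\Psi$), and symmetrically each $\tfrac{1}{2}V_\mu$ down-step is split into a direct $\tfrac{1}{2}P_{\mu--}$ or a same-level $\tfrac{1}{2}P_{\mu-+}$ into $\mathcal{S}_+$ followed by a first-descent. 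These splittings preserve the first-passage-to-lower-level probabilities, so $\GG_{C'} = \GG_{C''}$.

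The main obstacle is the apparent circularity in the second step: substituting $\Psi$ for an excursion through $\mathcal{S}_+$ in $C'$ presupposes that the first-descent from $\mathcal{S}_+$ in $C'$ is $\Psi$, which is part of what we want to prove. The same tension appears in Theorems~\ref{theo:qbd-1} and~\ref{theo:qbd-2} and is resolved there by interpreting the argument as a probability-preserving regrouping of sample paths that eventually reach a lower level; I would use the same justification here. As a back-up algebraic verification, one can check directly that $\GG_C = \begin{bmatrix} 0 & \Psi \\ 0 & W \end{bmatrix}$ satisfies $\GG = C'_{-1} + C'_0\GG + C'_1\GG^2$ and is its minimal nonnegative solution, using the NARE~\eqref{eqn:Psi} together with the defining identity $(I-\lambda^{-1}U)W = V_\mu$; this provides a self-contained confirmation if any reader finds the probabilistic grouping argument uncomfortable.
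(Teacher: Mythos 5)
Your proof is correct and takes essentially the same route as the paper's: your intermediate QBD $C''$ is exactly the paper's second intermediate process~\eqref{unnamedC2}, so you merely merge the paper's first two $\GG$-preserving transformations (unfolding $(I-\mu^{-1}T_{++})^{-1}$ via geometric $\tfrac12 P_{\mu++}$ loops and unfolding $W$ via $\tfrac12 V_\lambda$ loops followed by $\tfrac12 V_\mu$) into a single step, and your final step using $V_\lambda = P_{\lambda--}+P_{\lambda-+}\Psi$ and $V_\mu = P_{\mu--}+P_{\mu-+}\Psi$ is identical to the paper's. The circularity you flag is present, and left implicit, in the paper's own argument (as in Theorems~\ref{theo:qbd-1} and~\ref{theo:qbd-2}), so your algebraic back-up via the fixed-point equation is a sensible extra safeguard rather than a genuinely different approach.
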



\begin{proof} In this proof we combine the transformations introduced in the previous ones. We first show that $\{Y_n^{C}, \kappa_n^C\}$ is $\GG$-equivalent to the QBD with transition matrices
	\begin{equation} \label{unnamedC} 
	\mbox{Down}:= \left[\begin{array}{cc} 
	0 & \frac{1}{2} P_{\mu {+-}} \\ 
	0 & W \end{array}\right], \; 
	\mbox{Same} := \left[\begin{array}{cc} 
	\frac{1}{2}P_{\mu ++} & \frac{1}{2}P_{\lambda +-} \\ 
	0 & 0 
	\end{array} \right], \;  \mbox{Up} := \left[\begin{array}{cc} 
	 	\frac{1}{2} P_{\lambda ++} & 0 \\
		0 & 0 
	 \end{array} \right].
	\end{equation}
We obtain this QBD from $\{Y_n^{C}, \kappa_n^C\}$ with the following modifications:
	\begin{itemize}
		\item[(a)] within the $\mathcal{S}_{+}$, we replace the transition matrix $(I - \mu^{-1} T_{++})^{-1} P_{\lambda ++}$ of increasing a level while remaining in $\mathcal{S}_{+}$ with 
		\begin{itemize} 
		\item staying in the same level and in $\mathcal{S}_{+}$ with transition matrix $1/2P_{\mu++}$ \bleu{for} any $n \geq 0$ number of steps, and then 
		 \item increas\bleu{ing} a level with $1/2P_{\lambda ++}$;
		\end{itemize} 
		\item[(b)] from $\mathcal{S}_{+}$ to $\mathcal{S}_{-}$ and decreasing a level, we replace the transition matrix $(I - \mu^{-1} T_{++})^{-1} P_{\mu +-}$ with 
		\begin{itemize} 
		\item staying in the same level and in $\mathcal{S}_{+}$ with transition matrix $1/2P_{\mu++}$ for any $n \geq 0$ number of steps, and then 
		 \item decreasing a level with $1/2P_{\mu +-}$;
		\end{itemize} 
		\item[(c)] from $\mathcal{S}_{+}$ to $\mathcal{S}_{-}$ and staying in the same level, we replace the transition matrix $(I - \mu^{-1} T_{++})^{-1} P_{\lambda +-}$, with 
		\begin{itemize} 
		\item staying in the same level and in $\mathcal{S}_{+}$ with transition matrix $1/2P_{\mu++}$ for any $n \geq 0$ number of steps, and then 
		 \item transition into a state in $\mathcal{S}_{-}$ in the same level with $1/2P_{\lambda +-}$.
		\end{itemize} 
	\end{itemize}

All these replacements produce equivalent transitions which do not alter the matrix $\GG$, since
	\begin{align*}
		\left(I - \frac{1}{\mu} T_{++}\right)^{-1}& = \left(I - \frac{1}{2}P_{\mu ++} \right)^{-1}\frac{1}{2}  = \sum_{n = 0}^{\infty} \left(\frac{1}{2}P_{\mu++} \right)^n \frac{1}{2} .
	\end{align*} 

	Next, with a similar argument we can show that~\eqref{unnamedC} is $\GG$-equivalent to the QBD with transition matrices
	\begin{equation} \label{unnamedC2} 
	\mbox{Down} := \left[\begin{array}{cc} 
	0 & \frac{1}{2} P_{\mu {+-}} \\
		\vspace*{-0.3cm} \\ 
	0 & \frac{1}{2}V_{\mu} \end{array}\right], \; 
	\mbox{Same} := \left[\begin{array}{cc} 
	\frac{1}{2}P_{\mu ++} & \frac{1}{2}P_{\lambda +-} \\ 
		\vspace*{-0.3cm} \\
	0 & \frac{1}{2}V_{\lambda} 
	\end{array} \right], \;  \mbox{Up}   := \left[\begin{array}{cc} 
	 	\frac{1}{2} P_{\lambda ++} & 0 \\
			\vspace*{-0.3cm} \\
		0 & 0 
	 \end{array} \right],
	\end{equation}
	since  
	\begin{align*} 
		W = (I - \lambda^{-1}U)^{-1}V_{\mu} = \left(I - \frac{1}{2} V_{\lambda}\right)^{-1} \frac{1}{2}V_{\mu} = \sum_{n = 0}^{\infty} \left(\frac{1}{2} V_{\lambda} \right)^n\frac{1}{2}V_{\mu}.
	\end{align*} 
	Finally, with the same technique we prove that \eqref{unnamedC2} is $\GG$-equivalent to $\{Y_{n}^{C'}, \kappa_n^{C'}\}$, since $V_{\mu} = P_{\mu --} + P_{\mu -+} \Psi$ and $V_{\lambda} = P_{\lambda --} +  P_{\lambda -+} \Psi.$
\end{proof} 

\section{Interpretation of Doubling Algorithms}
	\label{sec:int}

	We consider for the remainder of the paper the most general Quasi-Birth-Death process $\{Y_{n}^{C'}, \kappa_n^{C'}\}$ only. The same results hold for the other two processes, $\{Y_n^{A'}, \kappa_n^{A'}\}$ and $\{Y_n^{B'}, \kappa_n^{B'}\}$, by setting $\mu^{-1} := 0$ and $\lambda^{-1} := 0$, respectively. Moreover, since $\{Y_n^{A}, \kappa_n^{A}\}$, $\{Y_n^{B}, \kappa_n^{B}\}$ (as well as $\{Y_n^{\Delta}, \kappa_n^{\Delta}\}$ and $\{Y_n^{\Theta}, \kappa_n^{\Theta}\}$) are $\GG$-equivalent, the results hold for them too.
	
	For notational simplicity, we write $\{Y_{n}, \kappa_n\}$ instead of $\{Y_{n}^{C'}, \kappa_n^{C'}\}$, dropping the superscript. We also assume that the time $0$ marks the start of a busy period; hence $Y_0 = 1$, $\kappa_0 \in \mathcal{S}_{+}$. 

	\subsection{Level, mid-levels, and initial values of ADDA}
	
	Consider a sequence of integer-valued random variables $\{\tau_k\}_{k \geq 0}$ representing the times at which there is a level change, that is,
	\[
		\tau_{0} := 0, \quad \text{and} \quad \tau_{k+1} := \min \{n > \tau_{k} \colon Y_n \neq Y_{\tau_k} \} \text{ for $k > 0$}.
	\]
	We define an associated process $\{M_{\tau_k}, \kappa_{\tau_k}\}_{k \geq 0}$ as follows:  
	\begin{equation} 
		\label{def:halflevel}
		M_{\tau_k} := \begin{cases}
			Y_{\tau_k} - 1/2 & \text{if $\kappa_{\tau_k} \in \mathcal{S}_+$},\\
			Y_{\tau_k} + 1/2 & \text{if $\kappa_{\tau_k} \in \mathcal{S}_-$}.
		\end{cases}
	\end{equation}

	To see what the process $\{M_{\tau_k}, \kappa_{\tau_k}\}_{k \geq 0}$ represents in terms of the QBD $\{Y_n,\kappa_n\}$, we define \emph{mid-levels} to be the sequence $\{z + 1/2\}_{z \geq 0}$, and say that the QBD process $\{Y_n,\kappa_n\}$ \emph{crosses} the mid-level $z + 1/2$, at a certain time $n$, if its level changes from $z$ to $z+1$ or from $z+1$ to $z$. Then, the process $\{M_{\tau_k}\}$ has a simple interpretation:
	%
		it is the mid-level crossed by the process $\{Y_n, \kappa_n\}$ at times $\tau_k$. Moreover, if $Y_n$ has just completed an upward crossing at time $\tau_k$, then $\kappa_{\tau_k} \in \mathcal{S}_+$; if a downward crossing, then $\kappa_{\tau_k} \in \mathcal{S}_-$.
	%
%
\bleu{Figure~\ref{fig:halves} shows a sample path of $\{Y_n,\kappa_n\}$ and the corresponding sample path of $\{M_{\tau_k}\}$.}
\begin{figure}[h!]
    \centering
  		\includegraphics[scale=0.4]{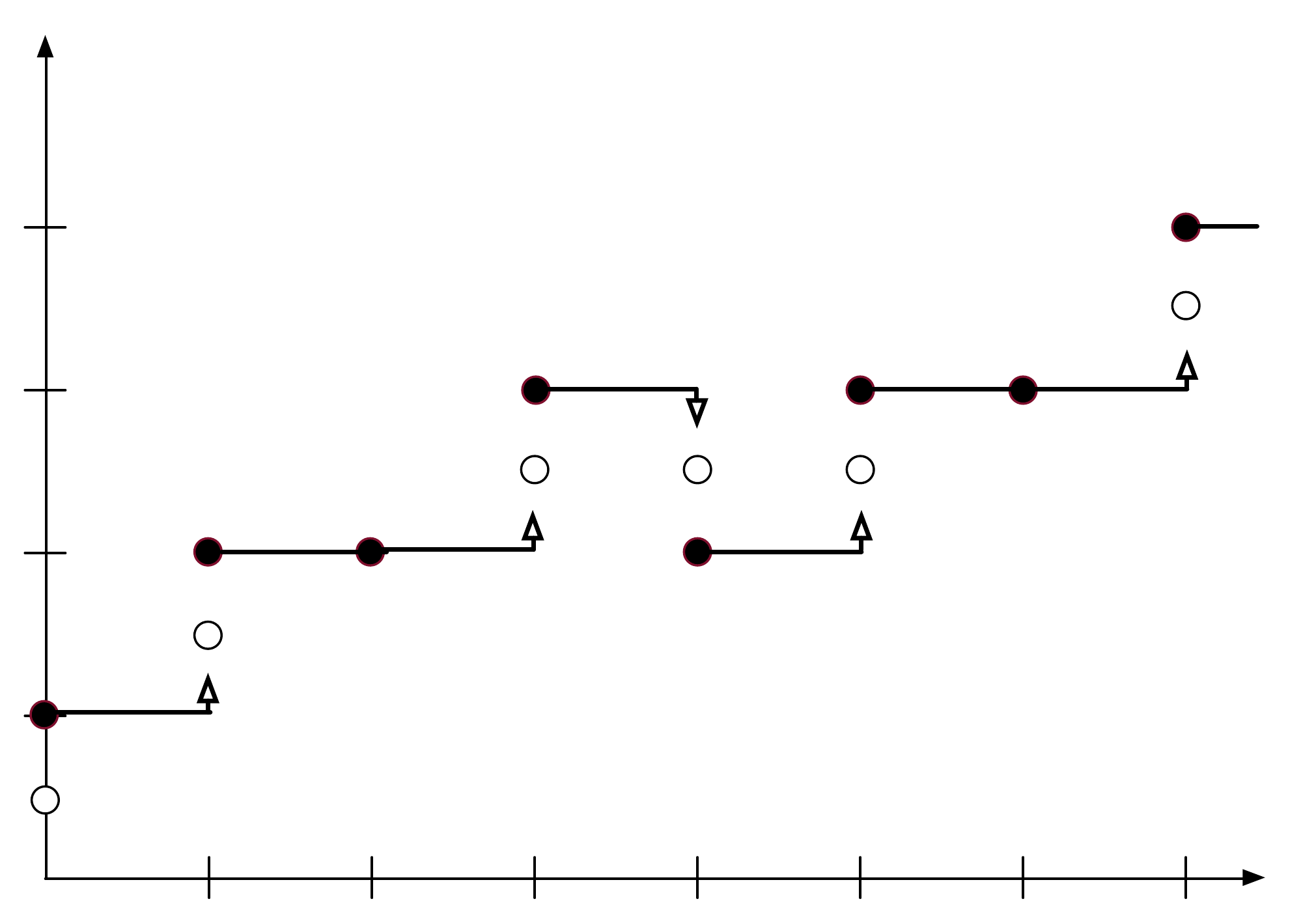}
		\put(-240,7){\makebox(0,0)[l]{$0$}} 
		\put(-240,37){\makebox(0,0)[l]{$1$}} 
		\put(-240,65){\makebox(0,0)[l]{$2$}} 
		\put(-240,95){\makebox(0,0)[l]{$3$}} 
		\put(-240,124){\makebox(0,0)[l]{$4$}} 
		\put(-199, -5){\makebox(0,0)[l]{$1$}} 
		\put(-171, -5){\makebox(0,0)[l]{$2$}} 
		\put(-142, -5){\makebox(0,0)[l]{$3$}} 
		\put(-113, -5){\makebox(0,0)[l]{$4$}} 
		\put(-85, -5){\makebox(0,0)[l]{$5$}} 
		\put(-55, -5){\makebox(0,0)[l]{$6$}} 
		\put(-27, -5){\makebox(0,0)[l]{$7$}}
		\put(-230, -19){\makebox(0,0)[l]{$\tau_0$}} 		
		\put(-199, -19){\makebox(0,0)[l]{$\tau_1$}} 
		\put(-142, -19){\makebox(0,0)[l]{$\tau_2$}} 
		\put(-113, -19){\makebox(0,0)[l]{$\tau_3$}} 
		\put(-85, -19){\makebox(0,0)[l]{$\tau_4$}} 
		\put(-27, -19){\makebox(0,0)[l]{$\tau_5$}} 
		\put(0,7){\makebox(0,0)[l]{$n$}} 
		\put(-232,170){\makebox(0,0)[l]{${Y}_n$}} 
     \caption{A sample path of $\{Y_n, \kappa_n\}$, in which $Y_n$ is represented by $\bullet$. By assumption, $\kappa_0 \in \mathcal{S}_+$. We have $\tau_1=1$, $\tau_2=3$, $\tau_3=4$, $\tau_4=5$, $\tau_5=7$, and $\kappa_{\tau_1}, \kappa_{\tau_2}, \kappa_{\tau_4}, \kappa_{\tau_5} \in \mathcal{S}_{+}, \, \kappa_{\tau_3} \in \mathcal{S}_{-}$. The sequence $\{M_{\tau_k}\}_{k \in \{0,\ldots, 5\}}$ is represented by $\circ$. \label{fig:halves}
     }
\end{figure}

	\begin{theorem}
		The process $\left\{M_{\tau_k}, \kappa_{\tau_k}\right\}_{k\in\mathbb{N}}$ is \bleu{a Quasi-Birth-Death process} on the state space $\{\mathbb{Z} + 1/2\} \times \mathcal{S}$ 
		 with transition probability matrices 
		\begin{align}
		 \label{eq:qbdD}
		 \hspace*{-0.3cm}
			D_{-1} := 
			\begin{bmatrix}
			  0 & 0\\
			  0 & F
			\end{bmatrix}, \,
			D_0 := \begin{bmatrix}
			  0 & G\\
			  H & 0
			\end{bmatrix}, \,
			D_1 := \begin{bmatrix}
			   E & 0\\
			   0 & 0
			\end{bmatrix},
		\end{align}
		where $E\in\mathbb{R}^{n_+\times n_+}, F\in\mathbb{R}^{n_-\times n_-}, G\in\mathbb{R}^{n_+ \times n_-}, H\in\mathbb{R}^{n_-\times n_+}$ are given by
		\begin{subequations}
		\label{eqn:EGHF}
		\begin{align}
			\begin{bmatrix}
			  E & G\\
			  H & F
			\end{bmatrix}
			&:=
			\left(
			I-\frac12
			\begin{bmatrix}
			  P_{\mu++} & P_{\lambda+-}\\
			  P_{\mu-+} & P_{\lambda--}
			\end{bmatrix}
			\right)^{-1}
			\frac12
			\begin{bmatrix}
			  P_{\lambda++} & P_{\mu+-}\\
			  P_{\lambda-+} & P_{\mu--}
			\end{bmatrix}
			\\
			&=
			\begin{bmatrix}
			  I - \mu^{-1}T_{++} & -\lambda^{-1}T_{+-}\\
			  -\mu^{-1}T_{-+} & I - \lambda^{-1}T_{--}
			\end{bmatrix}^{-1}
			\begin{bmatrix}
			  I+\lambda^{-1}T_{++} & \mu^{-1}T_{+-}\\
			  \lambda^{-1}T_{-+} & I + \mu^{-1}T_{--}
			\end{bmatrix}
			.
		\end{align}
		\end{subequations}
	\end{theorem}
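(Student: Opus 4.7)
My plan is to apply the strong Markov property of the underlying QBD $\{Y_n, \kappa_n\}$ from Theorem~\ref{theo:qbd-3} at the stopping times $\tau_k$, identify the admissible one-step transitions of $\{M_{\tau_k}, \kappa_{\tau_k}\}$ by case analysis on whether $\kappa_{\tau_k}$ lies in $\mathcal{S}_+$ or $\mathcal{S}_-$, and then compute the transition probabilities by geometric summation over the number of same-level steps taken by $\{Y_n\}$ between consecutive level changes. Space-homogeneity of the transition kernel in $M$ follows because $\{Y_n, \kappa_n\}$ is level-homogeneous for $Y_n \geq 1$ and because $M_{\tau_k}$ is a deterministic function of $(Y_{\tau_k}, \kappa_{\tau_k})$.

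Next, I would carry out a case analysis to establish the tridiagonal block structure. If $\kappa_{\tau_k} \in \mathcal{S}_+$, then $M_{\tau_k} = Y_{\tau_k} - 1/2$ by~\eqref{def:halflevel}. At the next level change, the block support of $C_1'$ in~\eqref{qbdgen} forces $\kappa_{\tau_{k+1}} \in \mathcal{S}_+$ whenever $Y$ moves up, giving $M_{\tau_{k+1}} = M_{\tau_k} + 1$; similarly the block support of $C_{-1}'$ forces $\kappa_{\tau_{k+1}} \in \mathcal{S}_-$ whenever $Y$ moves down, yielding $M_{\tau_{k+1}} = M_{\tau_k}$. Starting from $\kappa_{\tau_k} \in \mathcal{S}_-$, an analogous analysis produces either a same-level transition into $\mathcal{S}_+$ or a downward transition within $\mathcal{S}_-$. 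This accounts for the zero blocks in $D_{-1}, D_0, D_1$ and confirms that $\{M_{\tau_k}, \kappa_{\tau_k}\}$ is a QBD on $\{\mathbb{Z} + 1/2\} \times \mathcal{S}$.

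For the nonzero blocks, I observe that during the interval $[\tau_k, \tau_{k+1})$ the underlying QBD makes some number $n \geq 0$ of same-level transitions governed by $C_0'$, followed at time $\tau_{k+1}$ by a single level-changing transition governed by $C_1'$ or $C_{-1}'$. Summing the resulting geometric series gives $(I-C_0')^{-1}(C_1' + C_{-1}')$ as the joint phase transition matrix at stopping times, and by the case analysis above its upper-left, upper-right, lower-left, and lower-right blocks correspond exactly to up-steps, zero-steps, zero-steps, and down-steps of $M$, respectively. This matches~\eqref{eq:qbdD} with $(E,G,H,F)$ defined by the first line of~\eqref{eqn:EGHF}. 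To obtain the second line of~\eqref{eqn:EGHF}, I would substitute $P_\lambda = I + \lambda^{-1}T$ and $P_\mu = I + \mu^{-1}T$, use that the off-diagonal blocks of $I$ are zero, and cancel the common factor of $\tfrac12$ against the two; this is a routine block-matrix manipulation.

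The main obstacle is the bookkeeping in the second step: one must be careful that a downward level change of $Y$ corresponds to a \emph{zero}-step of $M$ when $\kappa_{\tau_k} \in \mathcal{S}_+$, and that an upward level change of $Y$ corresponds to a zero-step of $M$ when $\kappa_{\tau_k} \in \mathcal{S}_-$, a mild subtlety built into the mid-level convention~\eqref{def:halflevel}. Once this pairing is correctly recorded, the block identifications follow immediately and the final algebraic step reduces to cancellation of the factors of $\tfrac12$.
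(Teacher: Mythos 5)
Your proposal is correct and follows essentially the same route as the paper: observe $\{Y_n,\kappa_n\}$ only at level-change epochs, sum the geometric series over the same-level steps to get $(I-C_0')^{-1}C_{\pm 1}'$, and then use the mid-level convention~\eqref{def:halflevel} to reassign the four sub-blocks (in particular, placing $G$ and $H$ in $D_0$). The additional details you supply (the explicit case analysis on $\kappa_{\tau_k}$ and the block-support argument) are exactly the "analogous considerations" the paper leaves implicit.
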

	Note that~\eqref{eqn:EGHF} coincides with~\eqref{eqn:P0} in the case of unit rates, after setting $\mu^{-1}=\alpha$, $\lambda^{-1}=\beta$.

	\begin{proof}
	Let $J_{-1}, J_0,$ and $J_1$ represent the transition probability matrices of $\{Y_{\tau_k}, \kappa_{\tau_k}\}_{k \in \mathds{N}}$, the time-changed QBD obtained by observing the original process $\{Y_n, \kappa_n\}$ only at times at which there are changes of levels. Then, 
		\begin{align} 
			\label{eqn:Jmatrices}
			J_{-1} = (I-C_0')^{-1}C_{-1}' = \begin{bmatrix}
			  0 & G\\
			  0 & F
			\end{bmatrix}, \quad J_0 = 0, \quad J_1 
			= (I-C_0')^{-1}C_{1}' = \begin{bmatrix}
			  E & 0\\
			  H & 0
			\end{bmatrix}.
		\end{align}
		In the transitions corresponding to the sub-block $H$, the process $\{Y_{\tau_k}, \kappa_{\tau_k}\}$ moves from $(z, \mathcal{S}_-)$ at time $\tau_{\ell - 1}$ up to $(z+1, \mathcal{S}_+)$ at time $\tau_{\ell}$ for some $\ell \in \mathds{N}$. Hence, the mid-level, as defined in~\eqref{def:halflevel}, does not change from the epoch $\tau_{\ell - 1}$ to the epoch $\tau_{\ell}$, that is, $M_{\tau_{\ell - 1}} = M_{\tau_{\ell}} = z + 1/2$. Thus, for the process $\{M_{\tau_k}, \kappa_{\tau_k}\}$, the sub-block $H$ belongs to the bottom-left corner of the transition probability matrix $D_0$. 
		
		Analogous considerations hold for the other sub-blocks: $E$ corresponds to moving from a mid-level to the one above, $F$ to the one below, and $G$ corresponds to staying in the same mid-level.
	\end{proof}
	%
	
		The $\GG$-matrix of QBD~\eqref{eq:qbdD} is
		\begin{equation} \label{eqn:GD}
			\GG_D = \begin{bmatrix}
			  0 & \Psi W\\
			  0 & W
			\end{bmatrix},
		\end{equation}
		as observed in~\cite{bmp}. Indeed, decreasing from the mid-level $(z + 1/2)$ to the mid-level $(z - 1/2)$ for $\{M_{\tau_k}, \kappa_{\tau_k}\}$ corresponds to decreasing one level also for QBD $\{Y_n, \kappa_n\}$ if the initial state is in $\mathcal{S}_-$, since it means going from level $z$ to level $(z-1)$. On the other hand, such a decrease of $\{M_{\tau_k}, \kappa_{\tau_k}\}$ corresponds to a decrease of \emph{two} levels if the initial state is in $\mathcal{S}_+$, since it means going from level $(z+1)$ to level $(z-1)$.
		\begin{rem}
		The fact that this $\GG$-matrix does not contain $\Psi$ explicitly as a block is not an issue for our computation, since in doubling algorithms we obtain $\Psi$ from the limit $\Psi = \lim_{k\to\infty} G_k$, not as a block of $\GG_D$.
		\end{rem}
		\begin{rem}
		Expanding into blocks the relation $D_{-1}+D_0\GG_D+D_1\GG_D^2=\GG_D$, one obtains that $\Psi$ is a solution of the matrix equation 
		\begin{equation} \label{dare}
		X = G+EX(I-HX)^{-1}F.	
		\end{equation}
		Moreover, using the minimality of $\GG_D$, one can prove that $\Psi$ must be the minimal solution to~\eqref{dare}.
		\end{rem}

	\subsection{Interpretation of the doubling iteration} \label{subsec:doublingmap}

	Now that we have a physical interpretation for the QBD with transition matrices $D_{-1},D_0,$ and $D_1$, we consider what it means to apply one step of the iteration~\eqref{eqn:SDA} to it. 
	Bini \emph{et al.}~\cite{bmp} showed, via algebraic computations, that applying one step of the Cyclic Reduction (CR) algorithm
	\begin{subequations} \label{cr}
	\begin{align}
	\widehat{D}_{-1} &= D_{-1}(I-D_0)^{-1}D_{-1}, \label{cr:-1} \\
	\widehat{D}_0 &= D_0 + D_1(I-D_0)^{-1}D_{-1} + D_{-1}(I-D_0)^{-1}D_1,\\
	\widehat{D}_1 &= D_{1}(I-D_0)^{-1}D_1
	\end{align}
	\end{subequations}
	to the transition matrices in~\eqref{eq:qbdD} produces a QBD with the same block structure, i.e.,
	\begin{align}
	\label{crhat}
	 \hspace*{-0.3cm}
		\widehat{D}_{-1} := 
		\begin{bmatrix}
		  0 & 0\\
		  0 & \widehat{F}
		\end{bmatrix}, \,
		\widehat{D}_0 := \begin{bmatrix}
		  0 & \widehat{G}\\
		  \widehat{H} & 0
		\end{bmatrix}, \,
		\widehat{D}_1 := \begin{bmatrix}
		   \widehat{E} & 0\\
		   0 & 0
		\end{bmatrix},
	\end{align}
	with $\widehat{E},\widehat{F},\widehat{G},\widehat{H}$ precisely as in~\eqref{eqn:SDA}.
	\begin{rem}
		For more detail on Cyclic Reduction, see~\cite{blm05}; in particular, note that the full description includes a fourth equation, which is necessary in the general case to recover the matrix $\Psi$, but we shall see that in our case we can do without it. 
	\end{rem}
	By building on this relation with Cyclic Reduction, we can give a physical interpretation of the doubling iteration and justify the formulas~\eqref{eqn:SDA} directly. Our starting point is the physical interpretation of Cyclic Reduction~\cite{bmr}.
	\begin{lem}
	The matrices $\widehat{D}_{-1},\widehat{D}_0,\widehat{D}_1$ are the transition matrices of the QBD that corresponds to censoring out the odd-numbered levels $(1,3,5,\dots)$ from a QBD with transition matrices $D_{-1}, D_0, D_1$.
	\end{lem}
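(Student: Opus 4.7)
The plan is to invoke the classical censoring construction for Markov chains. Recall that if a Markov chain on a state space partitioned into a retained set $A$ and a censored set $B$ has transition blocks $P_{AA}, P_{AB}, P_{BA}, P_{BB}$, then the process observed only at times spent in $A$ is itself a Markov chain on $A$ with transition matrix $P_{AA} + P_{AB}(I - P_{BB})^{-1}P_{BA}$. I would take $A$ to be the set of states at even-numbered levels and $B$ the set at odd-numbered levels.

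The key observation that makes the computation clean is that the original QBD is skip-free: from an odd level, the only possible one-step transitions are same-level (via $D_0$), one up to an even level (via $D_1$), or one down to an even level (via $D_{-1}$). Hence $P_{BB}$ is block-diagonal across odd levels with each diagonal block equal to $D_0$, and $(I - P_{BB})^{-1}$ reduces to $(I - D_0)^{-1}$ applied at each odd level independently. Likewise $P_{AB}$ and $P_{BA}$ connect each even level only to its two adjacent odd levels, via $D_1$ and $D_{-1}$ respectively.

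Next I would enumerate the three types of censored transitions starting from an even level $2k$. A transition down to $2k-2$ must first go to $2k-1$ via $D_{-1}$, then stay at $2k-1$ for any number of steps weighted by $(I-D_0)^{-1}$, then exit down to $2k-2$ via $D_{-1}$; this contributes $D_{-1}(I-D_0)^{-1}D_{-1}$. Symmetrically a transition up to $2k+2$ contributes $D_{1}(I-D_0)^{-1}D_{1}$. A transition back to $2k$ can either occur directly via $D_0$, or via a detour to $2k-1$ followed by a jump up, or to $2k+1$ followed by a jump down, contributing $D_0 + D_{-1}(I-D_0)^{-1}D_{1} + D_{1}(I-D_0)^{-1}D_{-1}$. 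Relabeling each even level $2k$ as $k$, these three matrices coincide exactly with $\widehat{D}_{-1}, \widehat{D}_1$ and $\widehat{D}_0$ of~\eqref{cr}.

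The only real obstacle is essentially cosmetic: one must verify that censoring preserves the QBD structure, i.e., that in the relabeled index the censored chain moves by at most $\pm 1$ per step. This is immediate from the skip-freeness of the original QBD, since two successive visits to even levels can differ by at most $2$, hence by at most $1$ after dividing the level index by $2$. Everything else is a routine bookkeeping calculation, and the identification of blocks with $(\widehat{E},\widehat{F},\widehat{G},\widehat{H})$ follows by inspection of the block forms~\eqref{eq:qbdD} and~\eqref{crhat}.
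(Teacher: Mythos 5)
Your proposal is correct and follows essentially the same route as the paper, which states the lemma as the known probabilistic interpretation of Cyclic Reduction and sketches exactly the path decomposition you give (e.g., $\widehat{D}_{-1}$ as ``go down to $\ell-1$, linger there, go down again''). Your version merely formalizes this via the standard censoring/stochastic-complement formula and correctly notes that skip-freeness confines each excursion into the censored set to a single odd level, which is the observation that reduces $(I-P_{BB})^{-1}$ to $(I-D_0)^{-1}$.
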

	\begin{rem}
	The QBD resulting from the censoring has level set $2\mathbb{Z} = \{\dots, -4,-2,0,2,4,\dots \}$, so with $\widehat{D}_{-1}$ we mean the probability of moving from level $y$ to level $y-2$, and similarly for the other two matrices. Note that we have already similarly abused the concept of levels when considering the QBD $\{M_n, \kappa_n\}$ which has level set $\mathbb{Z}+1/2 = \{\dots,-3/2,-1/2,1/2,3/2,\dots\}$.
	\end{rem}

	It is not difficult to obtain the formulas~\eqref{cr} directly from this interpretation: for instance, the formula~\eqref{cr:-1} for $\widehat{D}_{-1}$ corresponds to the probability (starting from an even-numbered level $\ell$) of descending to level $\ell-1$, spending an arbitrary amount of time there, and then taking a second step down to $\ell-2$.

	This interpretation tells us that $\widehat{D}_{-1},\widehat{D}_0,\widehat{D}_1$ are the transition matrices of the QBD obtained from $\{M_{\tau_k}, \kappa_{\tau_k}\}$ by censoring out the \emph{odd-numbered} mid-levels, i.e., those of the form $2j+3/2$, and keeping only those of the form $2j+ 1/2$, $j\in\mathbb{Z}$. More formally, define 
	\begin{align*} 
    \widehat{\tau}_0 & =0,\\ 
    \widehat{\tau}_{k+1} & = \min\left\{n > \widehat{\tau}_k : \text{$n=\tau_i$ for some $i$, and $M_n=2j+1/2$ for some $j\in\mathbb{Z}$}\right\}.
	\end{align*} 
	Then, $\widehat{D}_{-1},\widehat{D}_0,\widehat{D}_1$ are the transition matrices of $\{M_{\widehat{\tau}_k}, \kappa_{\widehat{\tau}_k}\}$, which is a QBD with level set $2\mathbb{Z}+1/2 = \{\dots,-3/2,1/2,5/2,9/2,\dots\}$.

	Armed with this physical interpretation, we can justify the formulas~\eqref{eqn:SDA} directly. To compute the probabilities $\widehat{E}$ (after starting at the mid-level $(y+1/2)$ and in a positive phase) of reaching mid-level $(y+5/2)$ \emph{before} descending to $(y+1/2)$, we condition on the number of visits to the mid-level equidistant between the two, i.e., $y+3/2$: we can visit it either 
	\begin{itemize} 
		\item once (from below, with probability matrix $E^2$), or 
		\item three times (from below, then from above, then from below, with probability matrix $EGHE$), or  
		\item five times, or 
		\item \dots, 
	\end{itemize} 
	leading to the series
	\[ 
		\widehat{E} = E^2 + EGHE + EGHGHE + \dots = E(I-GH)^{-1}E.
	\]
	To compute the complementary probabilities $\widehat{G}$ of returning to mid-level $y+ 1/2$ with taboo level $y+ 5/2$, we condition again on the number of visits to the mid-level $y + 3/2$: we can visit it either 
	\begin{itemize} 
		\item zero times (with probability $G$), or
		\item two times (from below -- from above, with probability $EGF$), or 
		\item four times, or 
		\item \dots, 
	\end{itemize} 
	leading to the series
	\[
		\widehat{G} = G + EGF + EGHGF + \dots = G + E(I-GH)^{-1}GF.
	\]
	The formulas for $\widehat{F}$ and $\widehat{H}$ are obtained analogously.

	\subsection{Mean number of visits and recurrence}

	Another matrix of physical interest for our QBD process $\{M_n,\kappa_n\}$ is the minimal solution $\RR$ of the matrix equation 
		\begin{align*} 
			\RR^2D_{-1}+\RR D_0+D_1=\RR,
		\end{align*} which represents the expected number of visits to mid-level $(1+ 1/2)$, starting from mid-level $1/2$ and before returning to the same mid-level (see~\cite[Section~5]{blm05} for a description of this matrix in the context of a general QBD). We call it the \emph{$\RR$-matrix} of the QBD, in analogy with the $\GG$-matrix. This matrix is related to its counterpart for the fluid queue: the matrix $K=T_{++}+\Psi T_{-+}$ such that $\mathrm{e}^{Kx}$ is the expected number of visits to level $x$ of the fluid process $\{\overline{X}_t, \varphi_t\}$, starting from level $0$ and before returning to it~\cite[Section~1.8]{soares05}.

	\begin{theorem}
		The $\RR$-matrix of the QBD process $\{M_n,\kappa_n\}$ is given by
		\begin{equation}  \label{eqn:R}
			\RR_D = 
			\begin{bmatrix}
			  R_1 & R_1\Psi\\0 & 0
			\end{bmatrix},			
		\end{equation}
		where
		\begin{equation} \label{eqn:R1}
			R_1 := E(I-\Psi H)^{-1} = (I-\mu^{-1} K)^{-1}(I + \lambda^{-1}K).	
		\end{equation}
	\end{theorem}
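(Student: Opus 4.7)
The plan is to derive $\RR_D$ from $\GG_D$ via the standard QBD identity
\begin{equation*}
\RR = D_1(I - D_0 - D_1\GG)^{-1},
\end{equation*}
which reflects that visiting the level above consists of one upward jump followed by an arbitrary amount of time spent at the new level before descending. I will then simplify using two auxiliary identities: a consistency relation between $\Psi$, $W$ and the blocks $E,G,H$, extracted from $\GG_D = D_{-1} + D_0\GG_D + D_1\GG_D^2$, together with the NARE~\eqref{eqn:Psi} satisfied by $\Psi$ in the unit-rate setting.

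First, substituting the explicit form~\eqref{eqn:GD} of $\GG_D$ yields
\begin{equation*}
D_0 + D_1\GG_D = \begin{bmatrix} 0 & G + E\Psi W \\ H & 0 \end{bmatrix}.
\end{equation*}
Reading the top-right block of $\GG_D = D_{-1} + D_0\GG_D + D_1\GG_D^2$ gives $\Psi W = GW + E\Psi W^2$, and right-cancelling the invertible factor $W$ produces the key identity $\Psi = G + E\Psi W$. Hence $I - (D_0 + D_1\GG_D) = \begin{bmatrix} I & -\Psi \\ -H & I \end{bmatrix}$, whose inverse (by Schur complement) has first block row $\bigl[(I - \Psi H)^{-1},\ (I - \Psi H)^{-1}\Psi\bigr]$. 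Left-multiplying by $D_1 = \begin{bmatrix} E & 0 \\ 0 & 0 \end{bmatrix}$ annihilates the second block row and delivers
\begin{equation*}
\RR_D = \begin{bmatrix} E(I - \Psi H)^{-1} & E(I - \Psi H)^{-1}\Psi \\ 0 & 0 \end{bmatrix},
\end{equation*}
which is exactly the announced block form~\eqref{eqn:R} together with the first equality $R_1 = E(I-\Psi H)^{-1}$.

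The main obstacle is the second identification $R_1 = (I-\mu^{-1}K)^{-1}(I+\lambda^{-1}K)$ with $K = T_{++} + \Psi T_{-+}$. I would pre-multiply by $(I-\mu^{-1}K)$ and right-multiply by $(I - \Psi H)$, reducing the claim to
\begin{equation*}
(I - \mu^{-1}K)\,E = (I + \lambda^{-1}K)(I - \Psi H).
\end{equation*}
The first and third block rows of~\eqref{eqn:EGHF} read
\begin{align*}
(I - \mu^{-1}T_{++})E - \lambda^{-1}T_{+-}H &= I + \lambda^{-1}T_{++}, \\
(I - \lambda^{-1}T_{--})H - \mu^{-1}T_{-+}E &= \lambda^{-1}T_{-+},
\end{align*}
and allow me to eliminate $(I-\mu^{-1}T_{++})E$ and $\mu^{-1}T_{-+}E$ on the left-hand side. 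After expanding and cancelling common terms, the difference of the two sides collapses to
\begin{equation*}
\lambda^{-1}\bigl(T_{+-} + \Psi T_{--} + T_{++}\Psi + \Psi T_{-+}\Psi\bigr)H,
\end{equation*}
which vanishes by the NARE~\eqref{eqn:Psi} in the unit-rate setting. This completes the verification.
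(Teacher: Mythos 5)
Your proposal is correct and follows essentially the same route as the paper: the block structure of $\RR_D$ comes from the identity $\RR_D=D_1(I-D_0-D_1\GG_D)^{-1}$, and the identification $E(I-\Psi H)^{-1}=(I-\mu^{-1}K)^{-1}(I+\lambda^{-1}K)$ comes from the two block equations of~\eqref{eqn:EGHF} involving $E$ and $H$ combined with the Riccati equation~\eqref{eqn:Psi}. The only difference is cosmetic — the paper pre-multiplies the linear system by $\begin{bmatrix}I & \Psi\end{bmatrix}$ and recognizes $I+\lambda^{-1}K=P_{\lambda++}+\Psi P_{\lambda-+}$ directly, whereas you expand and cancel term by term; both reduce to the vanishing of $\lambda^{-1}(T_{+-}+\Psi T_{--}+T_{++}\Psi+\Psi T_{-+}\Psi)H$.
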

	\begin{proof}
		We start by proving the latter equality in~\eqref{eqn:R1}. From~\eqref{eqn:EGHF}, we get
		\[
			\begin{bmatrix}
			  I - \mu^{-1}T_{++} & -\lambda^{-1}T_{+-}\\
			  -\mu^{-1}T_{-+} & I - \lambda^{-1}T_{--}
			\end{bmatrix}
			\begin{bmatrix}
			  E\\H
			\end{bmatrix}=
			\begin{bmatrix}
			  P_{\lambda++}\\
			  P_{\lambda-+}
			\end{bmatrix}.
		\]
		Pre-multiplying this relation with $\begin{bmatrix}
		  I & \Psi
		\end{bmatrix}$, we obtain
		\begin{align*}
			P_{\lambda++} + \Psi P_{\lambda-+} &= (I-\mu^{-1}T_{++} -\Psi \mu^{-1}T_{-+})E + (\Psi- \lambda^{-1}T_{+-} -\lambda^{-1}\Psi T_{--} )H\\
			&= (I-\mu^{-1}T_{++} -\Psi \mu^{-1}T_{-+})E + (P_{\lambda++}+\Psi P_{\lambda-+})\Psi H,
		\end{align*}
		where the last equality follows from the algebraic Riccati equation~\eqref{eqn:Psi}. Rearranging terms in the last equation leads to
		\begin{align*}
			E(I-\Psi H)^{-1} &= (I-\mu^{-1}T_{++} -\Psi \mu^{-1}T_{-+})^{-1}(P_{\lambda++} + \Psi P_{\lambda-+}) \\
			&= (I-\mu^{-1} K)^{-1}(I + \lambda^{-1}K).
		\end{align*}

		The block structure of the matrix $\RR_D$ can be derived from the relation $\RR_D=D_1(I-D_0-D_1\GG_D)^{-1}$ (which is~\cite[Theorem 3.20, item 2]{blm05}).
	\end{proof}

	\begin{rem} \label{rem:Rs}
		Replacing $\mu^{-1}$ (resp. $\lambda^{-1}$) with $0$, one gets an expression for the $\RR$-matrix of $\{Y_n^{A'},\kappa_n^{A'}\}$ (resp. $\{Y_n^{B'},\kappa_n^{B'}\}$), and the same expression holds for the equivalent process $\{Y_n^{\Delta}, \kappa_n^{\Delta}\}$ (resp. $\{Y^{\Theta}_n, \kappa^{\Theta}_n\}$).	
	\end{rem}

	We compute the corresponding quantity $\RR_{C'}$ also for the QBD~\eqref{qbdgen}.
	\begin{theorem} \label{thm:Rc}
		The expected number of visits matrix $\RR_{C'}$ of the Quasi-Birth-Death process $\{Y_n^{C'},\kappa_n^{C'}\}$ with transition matrices~\eqref{qbdgen} is
		\[
		\RR_{C'} = 
		(I-C_0')\begin{bmatrix}
		  E & 0\\
		  H & 0
		\end{bmatrix}
		\begin{bmatrix}
		  I & R_1\\
		  0 & (I-H\Psi)^{-1}
		\end{bmatrix} ^{-1}
		(I-C_0')^{-1}		  
		\]
		and has spectral radius equal to $\rho(R_1)$.
	\end{theorem}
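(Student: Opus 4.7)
The plan is to reduce the computation to the $\RR$-matrix of the time-changed QBD $\{Y_{\tau_k},\kappa_{\tau_k}\}$, whose transition blocks $J_{-1}, J_0 = 0, J_1$ were recorded in~\eqref{eqn:Jmatrices}. Working with this censored process is much easier because $J_0$ vanishes. Applying the identity $\RR = A_1(I - A_0 - A_1\GG)^{-1}$ of~\cite[Theorem~3.20, item~2]{blm05} to both QBDs, and using that they share the same $\GG$-matrix $\GG_{C'}$ (since the censoring that relates them only removes same-level transitions and therefore preserves first-passage probabilities) together with $J_1 = (I-C_0')^{-1}C_1'$ from~\eqref{eqn:Jmatrices}, a short manipulation yields the similarity
\[
\RR_{C'} \;=\; (I-C_0')\,\RR_J\,(I-C_0')^{-1},
\]
which already reduces the spectral-radius claim to showing $\rho(\RR_J) = \rho(R_1)$.

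Second, I would compute $\RR_J = J_1(I - J_1\GG_{C'})^{-1}$ explicitly. With $J_1 = \begin{bmatrix} E & 0 \\ H & 0 \end{bmatrix}$ and $\GG_{C'} = \begin{bmatrix} 0 & \Psi \\ 0 & W \end{bmatrix}$, the product $J_1\GG_{C'}$ has a vanishing first block-column, so $I - J_1\GG_{C'}$ is block upper-triangular. The standard block-triangular inversion, combined with the push-through identity $E\Psi(I-H\Psi)^{-1} = E(I-\Psi H)^{-1}\Psi = R_1\Psi$, produces the stated closed-form expression for $\RR_{C'}$ after conjugation by $(I-C_0')$.

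Finally, I would read off the spectral radius from the rank factorization
\[
\RR_J \;=\; \begin{bmatrix} E \\ H \end{bmatrix}\begin{bmatrix} I & R_1\Psi \end{bmatrix}
\]
that emerges from the previous step. Since the nonzero spectrum of a product $XY$ coincides with that of $YX$, it suffices to compute $\begin{bmatrix} I & R_1\Psi \end{bmatrix}\begin{bmatrix} E \\ H \end{bmatrix} = E + R_1\Psi H$; and from $R_1 = E(I-\Psi H)^{-1}$ one immediately gets $R_1\Psi H = R_1 - E$, hence $E + R_1\Psi H = R_1$, so $\rho(\RR_{C'}) = \rho(\RR_J) = \rho(R_1)$. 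The main obstacle is the similarity step: one must see that the identities $J_1 = (I-C_0')^{-1}C_1'$ and $\GG_J = \GG_{C'}$ conspire so that $(I - A_0 - A_1\GG)^{-1}$ factors as $(I - C_0' - C_1'\GG_{C'})^{-1}(I-C_0')$. Everything else is routine block-matrix algebra, and the two-term telescoping $R_1\Psi H = R_1 - E$ is exactly what makes the final spectrum collapse to that of $R_1$.
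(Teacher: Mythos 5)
Your proposal is correct and follows essentially the same route as the paper: the paper's proof is precisely the factorization $\RR_{C'}=C_1'(I-(I-C_0')^{-1}C_1'\GG_C)^{-1}(I-C_0')^{-1}$ (your similarity with $\RR_J$), followed by inverting the block upper-triangular matrix $I-J_1\GG_C$ and applying $\rho(MN)=\rho(NM)$. The only cosmetic difference is in the last step, where the paper cycles the two $n\times n$ factors to obtain the block-lower-triangular matrix $\left[\begin{smallmatrix} R_1 & 0\\ (I-H\Psi)^{-1}H & 0\end{smallmatrix}\right]$, while you cycle the rank factorization $\left[\begin{smallmatrix}E\\H\end{smallmatrix}\right]\left[\begin{smallmatrix}I & R_1\Psi\end{smallmatrix}\right]$ down to the $n_+\times n_+$ matrix $E+R_1\Psi H=R_1$; the two computations rest on the same push-through identity.
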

	\begin{proof}
	Using again the formula from~\cite[Theorem~3.20]{blm05}, we get
	\begin{align*}
		\RR_{C'} &= C_1'(I-C_0'-C_1'\GG_C)^{-1}
		\\&=
		C_1'(I-(I-C_0')^{-1}C_1'\GG_C)^{-1}(I-C_0')^{-1}
		\\&=
		(I-C_0')\begin{bmatrix}
		  E & 0\\
		  H & 0
		\end{bmatrix}
		\left(I- \begin{bmatrix}
		  E & 0\\
		  H & 0
		\end{bmatrix} 
		\begin{bmatrix}
		  0 & \Psi\\
		  0 & W
		\end{bmatrix}
		\right)^{-1}
		(I-C_0')^{-1}
		\\&=
		(I-C_0')\begin{bmatrix}
		  E & 0\\
		  H & 0
		\end{bmatrix}
		\begin{bmatrix}
		  I & -E\Psi\\
		  0 & I-H\Psi
		\end{bmatrix} ^{-1}
		(I-C_0')^{-1}
		\\&=
		(I-C_0')\begin{bmatrix}
		  E & 0\\
		  H & 0
		\end{bmatrix}
		\begin{bmatrix}
		  I & E\Psi(I-H\Psi)^{-1}\\
		  0 & (I-H\Psi)^{-1}
		\end{bmatrix}
		(I-C_0')^{-1}, 		  
	\end{align*}
	where the third equality follows from~\eqref{eqn:Jmatrices}, and the last one from the expression for the inverse of a block triangular matrix.

	Since $\rho(MN)=\rho(NM)$ for any two matrices, 
	\begin{align*} 
	\rho(\RR_{C'}) & = \rho\left(
		\begin{bmatrix}
		  I & E\Psi(I-H\Psi)^{-1}\\
		  0 & (I-H\Psi)^{-1}
		\end{bmatrix}
		\begin{bmatrix}
		  E & 0\\
		  H & 0
		\end{bmatrix}
	\right)  \\
	& = \rho\left(
	\begin{bmatrix}
	  R_1 & 0\\
	  (I-H\Psi)^{-1}H & 0
	\end{bmatrix}
	\right) \\
	& = \rho(R_1),
	\end{align*} 
	where we have used the identity $I+\Psi (I-H\Psi)^{-1}H=(I-\Psi H)^{-1}$.
	\end{proof}

	Using these results, we can show that the fluid queue and its associated QBD have the same recurrence properties. We restrict to irreducible queues here, since recurrence may not be well defined for a reducible queue.
	\begin{theorem}
	Let the rate matrix $T$ be irreducible. Then, the fluid queue $\{X_t,\varphi_t\}$, its associated QBD $\{Y_{n}, \kappa_n\}$, and the mid-level QBD $\left\{M_{\tau_k}, \kappa_{\tau_k}\right\}$ have the same recurrence properties; i.e., either all of them are transient, all of them are null recurrent, or all of them are positive recurrent.
	\end{theorem}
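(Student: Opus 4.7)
The plan is to classify each process through a pair of spectral conditions on the matrices $\Psi$, $W$, $K$ and $R_1$, and then to exploit the explicit formulas for $\GG$ and $\RR$ derived in this section to transfer the classification between the three processes.

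By classical QBD theory (e.g.~\cite{blm05}), an irreducible homogeneous QBD is recurrent iff its $\GG$-matrix is stochastic, and among recurrent cases it is positive recurrent iff $\rho(\RR)<1$ and null recurrent iff $\rho(\RR)=1$. Applied to $\{Y_n,\kappa_n\}=\{Y_n^{C'},\kappa_n^{C'}\}$ and to the mid-level QBD $\{M_{\tau_k},\kappa_{\tau_k}\}$: from \eqref{eqn:GC} and \eqref{eqn:GD}, both $\GG_C$ and $\GG_D$ are stochastic iff $\Psi\mathbf{1}=\mathbf{1}$ and $W\mathbf{1}=\mathbf{1}$. Using $T\mathbf{1}=0$ one obtains $U\mathbf{1}=-T_{-+}(\mathbf{1}-\Psi\mathbf{1})$, and irreducibility of $T$ then forces $U\mathbf{1}=0$ iff $\Psi\mathbf{1}=\mathbf{1}$; combined with the identity $W\mathbf{1}=(I+\mu^{-1}U)(I-\lambda^{-1}U)^{-1}\mathbf{1}$ (and using $\lambda^{-1}+\mu^{-1}>0$ from \eqref{eqn:alphabeta}), both stochasticity conditions collapse to $\Psi\mathbf{1}=\mathbf{1}$. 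Likewise, Theorem~\ref{thm:Rc} and~\eqref{eqn:R} give $\rho(\RR_{C'})=\rho(\RR_D)=\rho(R_1)$. Thus the two QBDs share their recurrence classification: recurrent iff $\Psi$ is stochastic, and within that case positive/null recurrent according as $\rho(R_1)<1$ or $\rho(R_1)=1$.

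To match with the fluid $\{X_t,\varphi_t\}$ (classified in the usual way via its mean drift, equivalently via the recurrence of the regulated version), I would invoke the standard characterisation~\cite{soares05}: the fluid is recurrent iff $\Psi$ is stochastic, positive recurrent iff $K=T_{++}+\Psi T_{-+}$ is Hurwitz stable, and null recurrent iff $0\in\sigma(K)$. The first condition matches immediately. For the positive/null dichotomy, \eqref{eqn:R1} gives $R_1=f(K)$ with $f(z)=(1+\lambda^{-1}z)/(1-\mu^{-1}z)$, and under the parameter constraint~\eqref{eqn:alphabeta} together with the generator-like structure of $K$, $\rho(R_1)<1$ is equivalent to $K$ being Hurwitz stable and $\rho(R_1)=1$ is equivalent to $0\in\sigma(K)$. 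This spectral identification underlies the convergence statement in Theorem~\ref{thm:doublingconvergence} and is derived in~\cite{bmp,glx05,wwl} via a matrix-pencil/invariant-subspace analysis. Chaining the three equivalences gives the common recurrence classification.

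The main obstacle is precisely this last spectral step. Since $f$ is not the Cayley transform when $\lambda\ne\mu$, the image of the open left half-plane under $f$ need not lie in the open unit disk, and a naive estimate $|f(\kappa)|<1$ can fail for complex $\kappa$ with small real part and large imaginary part. The correct argument leverages~\eqref{eqn:alphabeta}---which in particular forces the matrices $I-\mu^{-1}T_{++}$ and $I-\lambda^{-1}T_{--}$ to be non-singular M-matrices---together with the Markov-generator structure of $K$, to locate $\sigma(K)$ inside a region where $f$ is contractive. Once this spectral identity is imported from the doubling literature, the rest of the proof is a direct assembly of the explicit formulas of Sections~\ref{sec:rei}--\ref{sec:int}.
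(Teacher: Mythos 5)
Your overall strategy coincides with the paper's: classify all three processes via the stochasticity of $\GG$ and the spectral radius of $\RR$, and transfer the classification to the fluid queue through the rational maps $U\mapsto W=(I+\mu^{-1}U)(I-\lambda^{-1}U)^{-1}$ and $K\mapsto R_1=(I-\mu^{-1}K)^{-1}(I+\lambda^{-1}K)$. Your reduction of stochasticity of $\GG_C$ and $\GG_D$ to $\Psi\vone=\vone$, and the identity $\rho(\RR_{C'})=\rho(\RR_D)=\rho(R_1)$, match the paper. However, the step you yourself flag as ``the main obstacle'' --- that $\rho(W)=1$ iff $U$ has a zero eigenvalue, and $\rho(R_1)=1$ iff $K$ does --- is exactly the content of the paper's proof, and you do not supply it: you propose to import it from the doubling literature as a black box. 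As written, that leaves the decisive step of the theorem unproven, so this is a genuine gap.

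The paper closes this gap with a short, self-contained Perron--Frobenius argument that entirely sidesteps the difficulty you correctly identify (namely, that $f(z)=(1+\mu^{-1}z)/(1-\lambda^{-1}z)$ need not map the open left half-plane into the open unit disk when $\lambda\neq\mu$, so one cannot argue eigenvalue-by-eigenvalue over all of $\sigma(U)$ or $\sigma(K)$). By irreducibility, $U$ and $K$ have Perron eigenpairs $\bs{v}U=\gamma\bs{v}$ and $\bs{w}K=\delta\bs{w}$ with $\bs{v},\bs{w}>\vzero$ and \emph{real} $\gamma,\delta\leq 0$. A direct computation shows $\bs{v}W=\frac{1+\mu^{-1}\gamma}{1-\lambda^{-1}\gamma}\,\bs{v}$ and $\bs{w}R_1=\frac{1+\lambda^{-1}\delta}{1-\mu^{-1}\delta}\,\bs{w}$; since $W$ and $R_1$ are nonnegative and the eigenvectors are strictly positive, these eigenvalues must be the Perron roots $\rho(W)$ and $\rho(R_1)$. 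One then only needs to evaluate $f$ at the single real point $\gamma$ (resp.\ $\delta$), where it clearly equals $1$ iff the argument is $0$ and is strictly less than $1$ otherwise. So no control over the complex spectrum, matrix pencils, or M-matrix localisation is needed --- positivity of the Perron eigenvector does all the work. If you replace your appeal to~\cite{bmp,glx05,wwl} with this two-line argument, your proof becomes complete and essentially identical to the paper's.
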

	\begin{proof}
	It is well established~\cite[Section~5]{blm05} that the first return matrix $\GG$ is stochastic for a recurrent QBD and substochastic for a transient one. An analogous characterization holds for a fluid queue~\cite[Theorem~4.5]{glr11b}: the matrix $U$ is a generator for a recurrent queue and a sub-generator for a transient queue. Hence we want to show that $\rho(\GG_C)=\rho(\GG_D)=\rho(W)$ equals $1$ if and only if $U$ has a $0$ eigenvalue.

	Note that, in an irreducible fluid queue, all entries of $U$ are positive, since there are paths between any two states, and the queue may remain in every state for an arbitrary long or short amount of time. Let $\gamma \leq 0$ be the Perron eigenvalue of $U$, i.e., $\bs{v} U = \gamma \bs{v}$ for a suitable row vector $\bs{v} > 0$. Then, 
	\begin{equation} \label{perronG}
		\bs{v}W = \bs{v}(I+\mu^{-1} U)(I-\lambda^{-1} U)^{-1} = \bs{v}\frac{1+\mu^{-1} \gamma}{1-\lambda^{-1} \gamma},
	\end{equation}
	which is $1$ when $\gamma=0$ and strictly smaller than $1$ when $\gamma < 0$. Since $\bs{v} > \bs{0}$, this must be the Perron eigenpair of $W$; hence, $\rho(W)=1$ when $U$ is a generator and $\rho(W)<1$ when $U$ is a subgenerator.

	The above argument proves that $\{X_t, \varphi_t\}$ is transient (resp. recurrent) whenever $\{Y_n,\kappa_n\}$ and $\{M_n,\kappa_n\}$ are transient (resp. recurrent). To tell apart null recurrent and positive recurrent processes, we need to consider the spectral radii of $\RR$ and $K$ as well.

	By an analogous argument, all entries of $K$ are positive; if $\bs{w}K=\delta \bs{w}$, with $\bs{w}> \bs{0}$ a row vector and $\delta \leq 0$, is its Perron eigenpair, then
	\begin{equation} \label{perronR}
		\bs{w}R_1 = \bs{w}(I-\mu^{-1} K)^{-1}(I + \lambda^{-1}K) = \bs{w}\frac{1 + \lambda^{-1}\delta}{1-\mu^{-1} \delta}
	\end{equation}
	is the Perron eigenpair of $R_1$. 
	Hence, the QBD is positive recurrent $\iff$ $\rho(\RR)=1$ $\iff$ $K$ has a zero eigenvalue $\iff$ the fluid queue is positive recurrent; the first and last implications follow from known properties of QBDs \cite[Section~5]{blm05} and fluid queues \cite[Theorem~4.5]{glr11b}, respectively .
	\end{proof}
	\begin{cor} \label{cor:increasing}
	The spectral radii of the $\GG$-matrix and of the $\RR$-matrix of these Quasi-Birth-Death processes are increasing functions of $\lambda$ and $\mu$.
	\end{cor}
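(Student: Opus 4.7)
The plan is to reduce the monotonicity of spectral radii to the explicit Perron-eigenvalue formulas already derived in the previous theorem's proof, and then verify monotonicity by elementary calculus.

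First I would observe that for every QBD under consideration the relevant spectral radius equals either $\rho(W)$ or $\rho(R_1)$. Indeed, from~\eqref{eqn:GA-Ram}, \eqref{eqn:GB-Guy}, \eqref{eqn:GC} and~\eqref{eqn:GD}, the matrix $\GG$ is block upper-triangular with diagonal blocks equal to $0$ and one of $(I-\lambda^{-1}U)^{-1}$, $V_\mu$, or $W$; since $W = V_\mu(I-\lambda^{-1}U)^{-1}$ specializes to the other two when $\mu\to\infty$ or $\lambda\to\infty$, in all cases $\rho(\GG)=\rho(W)$. Analogously, from~\eqref{eqn:R} and Theorem~\ref{thm:Rc} (together with Remark~\ref{rem:Rs}), in every case $\rho(\RR)=\rho(R_1)$.

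Next, I would invoke the Perron-eigenvalue formulas already derived in the previous proof: writing $\gamma\leq 0$ and $\delta\leq 0$ for the Perron eigenvalues of $U$ and $K$ respectively (neither of which depends on $\lambda,\mu$, since $U$ and $K$ are determined by $T$ and $\Psi$), equations~\eqref{perronG} and~\eqref{perronR} give
\begin{equation*}
\rho(W)=\frac{1+\mu^{-1}\gamma}{1-\lambda^{-1}\gamma},\qquad \rho(R_1)=\frac{1+\lambda^{-1}\delta}{1-\mu^{-1}\delta}.
\end{equation*}

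Finally, I would differentiate these expressions with respect to $\lambda$ and $\mu$ separately. For instance, $\partial_\lambda \rho(W) = (1+\mu^{-1}\gamma)\cdot(-\gamma/\lambda^2)/(1-\lambda^{-1}\gamma)^2$, which is non-negative since $\gamma\leq 0$ (note $1+\mu^{-1}\gamma\geq 0$ because $\rho(W)\geq 0$ and the denominator is positive). An identical computation for $\partial_\mu \rho(W)$, $\partial_\lambda \rho(R_1)$, and $\partial_\mu \rho(R_1)$ completes the argument. The only mildly delicate point, which I would mention explicitly, is checking non-negativity of the numerators $1+\mu^{-1}\gamma$ and $1+\lambda^{-1}\delta$; these follow from the admissible ranges $0\leq \mu^{-1}\leq \alpha_{\mathrm{opt}}$ and $0\leq \lambda^{-1}\leq \beta_{\mathrm{opt}}$ in~\eqref{eqn:alphabeta}, since these ensure that $I+\mu^{-1}U$ and $I+\lambda^{-1}K$ have non-negative diagonal entries and hence non-negative Perron eigenvalues. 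No single step is a genuine obstacle; the work is essentially collecting the results of the preceding theorem and performing one derivative sign-check.
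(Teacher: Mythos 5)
Your proposal is correct and follows exactly the route the paper intends: the corollary is stated as an immediate consequence of the Perron-eigenvalue formulas~\eqref{perronG} and~\eqref{perronR}, which give $\rho(W)=\frac{1+\mu^{-1}\gamma}{1-\lambda^{-1}\gamma}$ and $\rho(R_1)=\frac{1+\lambda^{-1}\delta}{1-\mu^{-1}\delta}$ with $\gamma,\delta\leq 0$ independent of $\lambda,\mu$, and monotonicity follows by the sign check you perform. Your extra care in verifying $1+\mu^{-1}\gamma\geq 0$ and $1+\lambda^{-1}\delta\geq 0$ via the admissible parameter range~\eqref{eqn:alphabeta} (together with the non-negativity of the off-diagonal entries of $U$ and $K$) is a welcome addition, since the paper leaves that point implicit.
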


	\subsection{Convergence properties}

	We now revisit the convergence results for doubling algorithms (Theorem~\ref{thm:doublingconvergence}) using our interpretation. An algebraic proof of this theorem is already given in~\cite{ChiCGMLX09,glx05,wwl}; the aim of this section is not to give another formal proof, but only a heuristic justification that appeals to physical arguments.

	As the matrix $\Psi$ is a block of $\GG_C$, it has a physical meaning for the QBD $\{Y_n,\kappa_n\}$, similar to the one it has for the fluid process:
	\begin{align*}
	(\Psi)_{ij} &= \mathds{P}[\text{$Y$ reaches level $y$ for the first time, in}\\
	& \;\;\;\;\; \text{phase $j\in\mathcal{S}_- \mid (Y_0,\kappa_0) = (y+1,i)$, $i\in\mathcal{S}_+$}],
	\end{align*}
	or, equivalently, in terms of mid-levels,
	\begin{align*}
	(\Psi)_{ij} &= \mathds{P}[\text{$M$ returns to mid-level $y+1/2$ for the first time, in}\\
	& \;\;\;\;\; \text{phase $j\in\mathcal{S}_- \mid (M_0,\kappa_0) = (y+1/2,i)$, $i\in\mathcal{S}_+$}].
	\end{align*}
	
	Moreover, by repeating the censoring process described in Section~\ref{subsec:doublingmap} $k$ times, we obtain
	\begin{align}
		& (E_k)_{ij} = \mathds{P}[\text{$M$  reaches mid-level $(y+2^k+1/2)$ in state $j\in\mathcal{S}_+$ \emph{before}} \nonumber \\ 
		& \quad\quad\quad\quad \text{returning to $(y+1/2)$} \mid \text{$(M_0,\kappa_0) = (y+1/2,i)$, $i\in\mathcal{S}_+$} ],\\
		& (F_k)_{ij} = \mathds{P}[\text{$M$ reaches mid-level $(y-2^k+1/2)$ in state $j\in\mathcal{S}_-$ \emph{before}}  \nonumber \\ 
		& \quad\quad\quad\quad \text{returning to $(y+1/2)$} \mid \text{$(M_0,\kappa_0) = (y+1/2,i)$, $i\in\mathcal{S}_-$} ], \label{eqn:Fks5} \\
		& (G_k)_{ij} = \mathds{P}[\text{$M$ returns to mid-level $(y+1/2)$ in state $j\in\mathcal{S}_-$ \emph{before}}  \nonumber \\
		& \quad\quad\quad\quad \text{reaching $(y+2^k+1/2)$} \mid \text{$(M_0,\kappa_0) = (y+1/2,i)$, $i\in\mathcal{S}_+$} ],\\
		& (H_k)_{ij} = \mathds{P}[\text{$M$ returns to mid-level $(y+1/2)$ in state $j\in\mathcal{S}_+$ \emph{before}}  \nonumber \\
		& \quad\quad\quad\quad \text{reaching $(y-2^k+1/2)$} \mid \text{$(M_0,\kappa_0) = (y+1/2,i)$, $i\in\mathcal{S}_-$} ].
	\end{align}
	In particular, from the expression for $G_k$ it follows that $$0 \leq G_0 \leq G_1 \leq G_2 \leq \dots,$$ and comparing it to the expressions for $\Psi$ one gets
	\begin{align*}
	&(\Psi - G_k)_{ij} = \mathds{P}[\text{$M$ returns to mid-level $(y+1/2)$ in state $j\in\mathcal{S}_-$ \emph{after}} \\ 
		& \;\;\;\;\; \text{\emph{reaching $(y+2^k+1/2)$ at least once}} \mid \text{$(M_0,\kappa_0) = (y+1/2,i)$, $i\in\mathcal{S}_+$} ].
	\end{align*}

	We use these formulas in three different arguments, depending on the recurrence character of the process.
	
	If the QBD $\{M_n,\kappa_n\}$ is transient, then:  
	\begin{itemize} 
		\item[(a)] The probability of reaching mid-level $(y-2^k+1/2)$ starting from mid-level $(y+1/2)$ --- i.e., descending $2^k$ levels --- is given by the entries of $\GG_D^{2^k}$, which go to zero like $\rho(W)^{2^k}$. 
		\item In~\eqref{eqn:Fks5}, entries of $F_k$ are expressed as the sum of the probabilities of certain sample paths involving descending from mid-level $(y+1/2)$ to $(y-2^k+1/2)$. Hence, these entries all converge to zero as $O(\rho(W)^{2^k})$. 
		\item Similarly, the entries of $\Psi - G_k$ are given by the sum of probabilities of certain sample paths, all of them involving descending from mid-level $(y+2^k+1/2)$ to $(y+1/2)$. Hence, they converge to zero as $O(\rho(W)^{2^k})$.
	\end{itemize} 
	
	If the QBD $\{M_n,\kappa_n\}$ is positive recurrent, then:  
	\begin{itemize} 
		\item[(a)] Symmetrically, the probability of reaching mid-level $(y+2^k+1/2)$ starting from $(y+1/2)$ --- i.e., ascending $2^k$ levels --- goes to zero as $O(\sigma^{2^k})$ for a certain $\sigma$. By considering the level-reversed process, one can show that $\sigma=\rho(\RR_D)=\rho(R_1)$, see e.g.~\cite[Theorem~5.8]{lr93}.
		\item[(b)] The entries of $E_k$ and those of $\Psi-G_k$ are given by the sum of probabilities of certain sample paths, all of them involving ascending from mid-level $(y+1/2)$ to mid-level $(y+2^k+1/2)$. Hence, they all go to zero as $O(\rho(R_1)^{2^k})$.
	\end{itemize} 
	
	If the QBD $\{M_n,\kappa_n\}$ is null recurrent, then:
	\begin{itemize} 
		\item[(a)] Intuitively, moving up and down are equally likely (when suitably averaged over long times). So the probability of reaching mid-level $y+2^k+1/2$ starting from $(y+1/2)$ before returning to $(y+1/2)$ behaves like the analogous probability for a simple one-dimensional random walk, i.e., it goes to zero like $O(2^{-k})$. This shows that the entries of $E_k$ (and, symmetrically, those of $F_k$) go to zero as $O(2^{-k})$. 
		\item[(b)] Similarly, the entries in $\Psi - G_k$ are associated to sample paths that involve reaching mid-level $(y+2^k+1/2)$ before returning to $(y+1/2)$, hence they also go to zero as $O(2^{-k})$.
	\end{itemize}
	
	Reversing the up and down directions corresponds to swapping $E_k$ with $F_k$ and $G_k$ with $H_k$. Hence, in particular, $H_k$ converges to the matrix $\widehat{\Psi}$, which is the analogue of $\Psi$ for the level-reversed process. 

	\section{Conclusions} \label{sec:conclusion}
	We conclude our journey through the doubling algorithms with a brief discussion of its advantages. There are two main improvements with respect to Cyclic Reduction (CR) on the various QBDs introduced in Section~\ref{sec:rei}:
	\begin{itemize}
		\item The transition matrices~\eqref{eq:qbdD}, obtained by switching to the mid-level QBD $\{M_{\tau_k}, \kappa_{\tau_k}\}$, have only four nonzero blocks, and this zero structure is preserved by CR/doubling, which yields at each step QBDs of the form
		\begin{align}
		 \label{eq:crDi}
		 \hspace*{-0.3cm}
			D^{(i)}_{-1} = 
			\begin{bmatrix}
			  0 & 0\\
			  0 & F_i
			\end{bmatrix}, \,
			D^{(i)}_0 = \begin{bmatrix}
			  0 & G_i\\
			  H_i & 0
			\end{bmatrix}, \,
			D^{(i)}_1 = \begin{bmatrix}
			   E_i & 0\\
			   0 & 0
			\end{bmatrix}.
		\end{align}
		In contrast, six blocks fill in during the steps of CR on the QBDs from Section~\ref{sec:rei}. This is the main reason why the cost per step of doubling is smaller: there are fewer matrices to store and update.
		\item In the positive recurrent (resp. transient) case, the convergence speed of both doubling and Cyclic Reduction depends on the spectral radius of the $\RR$-matrix (resp. $\GG$-matrix) of the involved QBD: the smaller this spectral radius is, the faster the convergence. This property is shown in~\cite[Theorem~7.6]{blm05} for CR. The spectral radius is an increasing function of $\lambda$ and $\mu$ (Corollary~\ref{cor:increasing}).

		Hence, in addition to each iteration being faster, ADDA with $\mu^{-1}=\alpha=\alpha_{\mathrm{opt}}$, $\lambda^{-1}=\beta = \beta_{\mathrm{opt}}$ requires fewer iterations to reach convergence than the algorithms based on CR on~\eqref{eqn:qbdram}--\eqref{qbdguymod}, which correspond to choosing $\mu=\infty$ or $\lambda=\infty$.


	\end{itemize}
	
	\section*{Acknowledgement} The first two authors would like to acknowledge the support of ACEMS (ARC Centre of Excellence for Mathematical and Statistical Frontiers). The third author would like to acknowledge the support of INDAM (Istituto Nazionale di Alta Matematica) and of a PRA 2017 project of the university of Pisa. All authors are grateful to Oscar Peralta for his comments on a preliminary version of this work.

\bibliography{interpretation}
\bibliographystyle{abbrv}

\end{document}